\newtheorem{theo}{Theorem}
\newtheorem{cor}{Corollary}
\newtheorem{lem}{Lemma}
\newtheorem{prop}{Proposition}
\theoremstyle{definition}
\newtheorem{defn}{Definition}
\theoremstyle{remark}
\newtheorem{rem}{\bf Remark\/}
\newtheorem{prob}{\bf  Problem\/}
\numberwithin{equation}{section}
\def\1{{\mathchoice {\rm 1\mskip-4mu l} {\rm 1\mskip-4mu l}{\rm 1\mskip-4.5mu l} {\rm 1\mskip-5mu l}}}
\newcommand{\ds}{\displaystyle}
\title{Lelong numbers of $m-$subharmonic functions}
\author[A. Benali]{Amel Benali}
\email{amelbenali3010@gmail.com}
\address{Laboratory of mathematics and applications \\ Faculty of sciences of Gab\`es \\ University of Gab\`es \\ 6072 Gab\`es Tunisia.}
\author[N. Ghiloufi]{Noureddine Ghiloufi}
\email{noureddine.ghiloufi@fsg.rnu.tn}
\address{Department of mathematics\\College of science\\P.O. box 400 King Faisal University \\  Al-Ahsaa, 31982\\ Kingdom of Saudi Arabia.}
\subjclass[2010]{32U25; 32U40; 32U05}
\keywords{Lelong number, positive current, $m-$subharmonic function, integrability exponent.}
\begin{document}

\begin{abstract}
    In this paper we study the existence of Lelong numbers of $m-$subharmonic currents of bidimension $(p,p)$ on an open subset of $\Bbb C^n$, when $m+p\geq n$. In the special case of $m-$subharmonic function $\varphi$, we give a relationship between the Lelong numbers of $dd^c\varphi$ and the mean values of $\varphi$ on spheres or balls. As an application we study the integrability exponent of $\varphi$. We express the integrability exponent  of $\varphi$  in terms of volume of sub-level sets of $\varphi$  and we give a  link between this exponent and  its Lelong number.
\end{abstract}
\maketitle

\section{Introduction}
In complex analysis and geometry, the notion of Lelong numbers of  positive currents has many applications. A famous result due to Siu \cite{Si}, proves that if $T$ is a positive closed current of bidimension $(p,p)$ on an open set $\Omega$ of $\mathbb C^n$, then the level subset $E_T(c)$ of points $z$ where the Lelong number $\nu_T(z)$ of $T$ at $z$ is greater than or equal to $c$ is an analytic set of dimension less than or equal to $p$ for any real $c>0$. In a particular case, if  $u$ is a plurisubharmonic function  on $\Omega $ then the Lelong number $\nu_u(a)$  of $u$ (of the current $dd^cu$)  at a point $a\in \Omega$  characterizes the complex singularity exponent $c_a(u)$ of $u$ at $a$.  In fact, Skoda \cite{Sk} stated the following inequalities:
$$\ds\frac{1}{\nu_{u}(a)}\leq c_a(u)\leq \frac{n}{\nu_{u}(a)}.$$
This result was enhanced by Demailly and Pham \cite{De-Ph}, who showed a sharp relationship between this exponent and  the Lelong numbers of  currents $(dd^c\varphi)^j$ at $a$ for $1\leq j\leq n$.\\

In 2005, Blocki \cite{Bl} posed a problem about the integrability exponents of $m-$subharmonic functions, he conjectured that every $m-$subharmonic function belongs to $L^q_{loc}(\Omega)$ for any $q<\frac{nm}{n-m}$ ie. the integrability exponent (the supremum of $q$) is greater than or equal to $\frac{nm}{n-m}$. This problem may be similar, but different, to the study of the complex singularity exponents of plurisubharmonic functions. To study this similar problem, we need a suitable definition of Lelong numbers of $m-$subharmonic functions. In 2016, Wan and Wang \cite{Wa-Wa} give the definition of the Lelong number of an $m-$subharmonic function $\psi$  as the Lelong number of the current $dd^c\psi$:
$$\nu_\psi(x)=\ds\lim_{r\to0^+}\frac{1}{r^{\frac{2n}{m}(m-1)}}\int_{\mathbb{B}(x,r)}dd^c\psi\wedge\beta^{n-1}.$$
Our aim is to find a link between the integrability exponents of $\psi$ and its Lelong numbers. To reach this aim, we need to know more properties of Lelong numbers.\\

The paper is organized as follows: in section $2$, we introduce the basic concepts which will be employed in the rest of this paper. Indeed, we recall the notions of $m-$positive currents, $m-$subharmonic functions and their Lelong numbers with some properties.\\

Section $3$ is devoted to the study of the existence of Lelong numbers of $m-$positive currents. We start by proving, the main tool in this study, the Lelong-Jensen formula. By this formula we conclude that, for an $m-$positive $m-$subharmonic current $T$  on $\Omega$, the Lelong function $\nu_T(a,\centerdot)$ associated to $T$ at $a\in\Omega$  is increasing on $]0,d(a,\partial\Omega)[$. Thus, its limit at zero $\nu_T(a)$ exists.
 Moreover, we study the case of an $m-$negative $m-$subharmonic current $S$, we show that $\nu_S(a)$ exists with the assumption that $t\mapsto t^{\frac{-2n}{m}+1}\nu_{dd^cS}(a,t)$ is integrable in a neighborhood of $0$.\\
 
In section $4$, we give a relationship between the Lelong number of an $m-$subharmonic function $\psi$ at $a\in\Omega$ and its mean values on spheres and balls. We conclude then  that the map $z\longmapsto\nu_\psi(z)$ is upper semi-continuous on $\Omega$.\\

Finally, as an application, we study in the last section  the integrability exponent $\imath_K(\psi)$ of an $m-$subharmonic function $\psi$ on a compact subset  $K$ of $\Omega$. We prove that if $\psi<0$ on a neighborhood of $K$ then we have
$$\imath_K(\psi)=\ds\sup\left\{\alpha>0; \exists\ C_{\alpha}>0, \forall\; t<0 \ V(\{\psi<t\})\cap K )\leq\frac{C_{\alpha}}{|t|^{\alpha}}\right\}.$$
At the end, we show that if $\nu_\varphi(a)>0$  then $$\frac{n}{n-m}\leq\imath_a(\psi)\leq\frac{nm}{n-m}.$$ In particular if the Blocki conjecture is true then we have $\imath_a(\psi)=\frac{nm}{n-m}$. We claim that this equality is true for $m=1$ and $\nu_\psi(a)>0$ and this result can be viewed as a  partial answer for the Blocki conjecture.

\section{Preliminaries}

Throughout this paper $\Omega$ is an open set of $\Bbb C^n$ and $m$ is an integer such that $1\leq m< n$. We use the operators $d=\partial+\overline{\partial}$ and $d^c=\frac{i}{4\pi}(\overline{\partial}-\partial)$ in order to have  $dd^c=\frac{i}{2\pi}\partial\overline{\partial}$. We set $\beta=dd^c|z|^2$ and $$\phi_m(r)=-\frac{1}{(\frac{n}{m}-1)r^{2(\frac{n}{m}-1)}}.$$ For $x\in\Bbb C^n$, $r>0$ and $0<r_1<r_2$ we set
  $$ \Bbb B(x,r_1,r_2)=\{z\in\Bbb C^n;\ r_1<|z-x|<r_2\}$$ and $$\Bbb B(x,r)=\{z\in\Bbb C^n;\ |z-x|<r\}.$$
In this part we recall some definitions of $m-$positivity cited by Dhouib-Elkhadhra in \cite{Dh-El}.
\begin{defn}${\\}$
	\begin{enumerate}
		\item A $(1,1)-$form $\alpha$ on $\Omega$ is said to be $m-$positive if $\alpha^j\wedge \beta^{n-j}\geq 0$ (in sens of currents) for every $1\leq j\leq m$.
		\item A $(p,p)-$form $\alpha$ on $\Omega$ is strongly $m-$positive if $$\alpha=\sum_{k=1}^N a_k\alpha_{1,k}\wedge \dots\wedge \alpha_{p,k}$$
		where $N={n\choose p}$ and $\alpha_{1,k},\dots \alpha_{p,k}$ are $m-$positive  $(1,1)-$forms and $a_k\geq0$ for every $k$.
		\item A current  $T$ of bidimension $(p,p)$ on $\Omega$ with  $m+p\geq n$  is said to be $m-$positive if $\langle T\wedge \beta^{n-m},\alpha\rangle\geq0$ for every strongly $m-$positive $(m+p-n,m+p-n)-$ test form $\alpha$ on $\Omega$.
		\item A function $\varphi:\Omega \rightarrow \mathbb{R}\cup\{-\infty\}$ is said to be  $m-$subharmonic ($m-$sh for short) if it is subharmonic and $dd^c\varphi$ is an $m-$positive current on $\Omega$.\\
We set $\mathcal{SH}_m(\Omega)$ the set of $m-$subharmonic functions on $\Omega$.
 	\end{enumerate}	
\end{defn}

 In general, if $T$ is an $m-$positive current of bidimension $(p,p)$ on $\Omega$ and $a\in\Omega$, the $m-$Lelong function of $T$ at $a$ is defined by:
$$\nu_T(a,r):= \ds\frac{1}{r^{\frac{2n}{m}(m+p-n)}}\ds\int_{\mathbb{B}(a,r)}T\wedge \beta^{p}.$$
for $r<d(a,\partial\Omega)$.
The Lelong number of $T$ at $a$, when it exists, is $$\nu_T(a)=\ds\lim_{r\to 0^+}\nu_T(a,r).$$

Here, we give a short list of the most basic properties of $m-sh$ functions:
\begin{prop}
Let $\Omega\subset\mathbb{C}^n$ be a domain.
\begin{enumerate}
  \item If $\varphi\in \mathcal{C}^2(\Omega)$, then $\varphi$ is  $m-$sh if and only if   $$(dd^c \varphi)^k\wedge \beta^{n-k}\geq0$$
for $k=1,2,...,m$, in the sens of currents.
 \item $\mathcal{PSH}(\Omega)=\mathcal{SH}_n(\Omega)\subsetneq \mathcal{SH}_{n-1}(\Omega)\subsetneq...\subsetneq \mathcal{SH}_1(\Omega)=\mathcal{SH}(\Omega)$.
  \item $\mathcal{SH}_m(\Omega)$ is a convex cone.
  \item If $\varphi$ is $m-$sh  and $\gamma:\mathbb{R}\rightarrow\mathbb{R}$ is a $\mathcal{C}^2-$smooth convex, increasing function then $\gamma\circ \varphi$ is also $m-$sh.
  \item The standard regularization $\varphi*\rho_{\varepsilon}$ of an $m-sh$ function is again $m-sh$.
  \item The limit of a uniformly converging or decreasing sequence of $m-$sh functions is either $m-$sh or identically equal to $-\infty$.
   \end{enumerate}
\end{prop}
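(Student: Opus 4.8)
The plan is to reduce every assertion to a single piece of pointwise linear algebra together with classical potential theory, treating the smooth case first and then passing to general $m$-sh functions by regularization. The linear-algebraic core is the following: at a fixed point $dd^c\varphi$ is a Hermitian $(1,1)$-form that can be simultaneously diagonalized with $\beta$, and if $\lambda=(\lambda_1,\dots,\lambda_n)$ denotes its eigenvalues then $(dd^c\varphi)^k\wedge\beta^{n-k}=c_{n,k}\,S_k(\lambda)\,\beta^n$, where $S_k$ is the $k$-th elementary symmetric polynomial and $c_{n,k}>0$. Hence the forms $(dd^c\varphi)^k\wedge\beta^{n-k}$ are nonnegative for $k=1,\dots,m$ precisely when $\lambda$ lies in the G\r{a}rding cone $\Gamma_m=\{S_1\geq0,\dots,S_m\geq0\}$.

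First I would prove (1): for $\varphi\in\mathcal{C}^2$ the membership $\lambda\in\Gamma_m$ is exactly the $m$-positivity of the $(1,1)$-form $dd^c\varphi$ in the sense of the definition, and the case $k=1$ is the Laplacian condition, i.e. subharmonicity; so the stated pointwise inequalities are equivalent to $\varphi$ being $m$-sh. For the general statements I would invoke two structural facts about $\Gamma_m$ from G\r{a}rding's theory of hyperbolic polynomials: $\Gamma_m$ is a convex cone, and the cones are nested, $\Gamma_n\subset\Gamma_{n-1}\subset\dots\subset\Gamma_1$. Convexity gives that the sum of two $m$-positive $(1,1)$-forms is $m$-positive, while closure under nonnegative scalars is immediate, which yields (3) in the smooth case; the nesting yields the inclusions of (2), and strictness I would settle by exhibiting explicit constant-coefficient quadratics whose eigenvalue vector sits in $\Gamma_{m-1}\setminus\Gamma_m$.

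For (4), in the smooth case I would expand $dd^c(\gamma\circ\varphi)=\gamma'(\varphi)\,dd^c\varphi+\gamma''(\varphi)\,d\varphi\wedge d^c\varphi$; since $\gamma',\gamma''\geq0$ the first term is a nonnegative multiple of an $m$-positive form and the second is a positive semidefinite rank-one form, hence $m$-positive, so their sum is $m$-positive by (3). To remove the smoothness hypothesis in (2)--(4) I would pass to the limit along regularizations, which is why (5) and (6) should be established first. Part (5) follows because $dd^c$ commutes with convolution, $dd^c(\varphi*\rho_{\varepsilon})=(dd^c\varphi)*\rho_{\varepsilon}$, and $m$-positivity of a current is tested against strongly $m$-positive test forms; convolution against the nonnegative kernel $\rho_{\varepsilon}$ is an average of translates, and since the $m$-positive cone is closed under nonnegative combinations this average is again $m$-positive (subharmonicity of $\varphi*\rho_{\varepsilon}$ is classical). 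Part (6) combines the classical dichotomy for decreasing or uniform limits of subharmonic functions (the limit is subharmonic or identically $-\infty$) with $L^1_{\mathrm{loc}}$ convergence, which gives $dd^c\varphi_j\to dd^c\varphi$ weakly, together with the weak closedness of the cone of $m$-positive currents.

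The main obstacle is organizational rather than deep: the convexity and nesting of $\Gamma_m$ are inputs I would cite from the G\r{a}rding/hyperbolic-polynomial literature, but the approximation used to upgrade (2)--(4) from the $\mathcal{C}^2$ setting to arbitrary $m$-sh functions relies on the fact that every $m$-sh function is a decreasing limit of smooth $m$-sh functions, which itself rests on (5) and (6). I would therefore fix the logical order --- establish (1), then (5) and (6), then the smooth versions of (2)--(4), and finally bootstrap to the general case --- and take care that the regularization of an $m$-sh function stays $m$-sh on slightly shrunken domains so that the limiting arguments remain valid.
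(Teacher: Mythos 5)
The paper itself contains no proof of this proposition: it is stated in the preliminaries as a recollection of known basic facts about $m$-sh functions (going back to Blocki and the references \cite{Bl}, \cite{Dh-El}), so there is no internal argument to compare against; your blind reconstruction must therefore be judged on its own, and in outline it is correct and is exactly the standard route: diagonalize $dd^c\varphi$ against $\beta$ so that $(dd^c\varphi)^k\wedge\beta^{n-k}=c_{n,k}\,S_k(\lambda)\,\beta^n$, import convexity and nesting of the G{\aa}rding cones $\Gamma_m$, differentiate $\gamma\circ\varphi$ in the smooth case, and bootstrap the general case through regularization (5) and monotone limits (6), with the correct logical order (1), (5)--(6), smooth (2)--(4), then the limit argument.

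Two points deserve sharpening. First, in (1) the paper defines $\varphi$ to be $m$-sh through $dd^c\varphi$ being an $m$-\emph{positive current}, i.e.\ $dd^c\varphi\wedge\beta^{n-m}\wedge\alpha_1\wedge\cdots\wedge\alpha_{m-1}\geq0$ for all $m$-positive $(1,1)$-forms $\alpha_i$; the equivalence of this with the diagonal inequalities $S_k(\lambda)\geq0$ for $k=1,\dots,m$ is not mere simultaneous diagonalization but precisely the polarized G{\aa}rding inequality for the hyperbolic polynomial $S_m$, together with the identification of $\{S_1\geq0,\dots,S_m\geq0\}$ with the closed G{\aa}rding cone (the closure of the component of $\{S_m>0\}$ containing $(1,\dots,1)$). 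You subsume this under ``G{\aa}rding's theory,'' which is acceptable as a citation, but you should flag that this polarization step is the actual content of (1); it is also what your convolution argument in (5) silently uses when it tests the averaged current against products of $m$-positive forms. Second, (3) does not need the $\Gamma_m$-convexity or any regularization at all: with the paper's current-theoretic definition, $m$-positivity is verified by a pairing $\langle T\wedge\beta^{n-m},\alpha\rangle$ that is \emph{linear} in $T$, so $dd^c(u+tv)$ is $m$-positive for $t\geq0$ immediately, and sums of subharmonic functions are subharmonic; convexity of $\Gamma_m$ is only needed for the pointwise form-level statement you use inside (4). With these provisos --- and noting that your bootstrap correctly relies on $\varphi*\rho_\varepsilon\downarrow\varphi$ for a radial decreasing kernel on shrunken domains, plus weak convergence $dd^c\varphi_j\to dd^c\varphi$ and weak closedness of the $m$-positive cone in (6) --- your proposal is sound and complete at the level of detail such a preliminary proposition warrants.
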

Now we recall some classes of $m-$sh functions on $\Omega$, in relation with the definition of the complex Hessian operator called Cegrell classes (see  \cite{Ha-Za} for more details):
\begin{itemize}
\item $\mathcal{E}_{0,m}(\Omega)$ is the convex cone of bounded negative $m-$sh function $\varphi$ on $\Omega$ such that $$\ds\lim_{z\rightarrow\partial\Omega}\varphi(z)=0\quad \hbox{and}\quad\int_{\Omega}(dd^c\varphi)^m\wedge\beta^{n-m}<+\infty.$$
 \item $\mathcal{F}_m(\Omega)$ is the class of negative $m-$sh functions on $\Omega$ such that there exists a sequence $(\varphi_j)_j$ in $ \mathcal{E}_{0,m}(\Omega)$ that decreases to $\varphi$ and $$\sup_j\int_\Omega(dd^c\varphi_j)^m\wedge\beta^{n-m}<+\infty.$$
\item We denote by $ \mathcal{E}_m(\Omega)$ the subclass of  negative $m-$sh functions  on $\Omega$  that coincides locally with elements of $\mathcal{F}_m$.
\end{itemize}

In the next, we introduce some properties that will employed in the sequel:\\

\begin{prop}(See \cite{Ha-Za})
\begin{itemize}
  \item $\mathcal{E}_{0,m}(\Omega)\subset \mathcal{F}_m(\Omega)\subset \mathcal{E}_m(\Omega)$.
  \item If $ \varphi\in\mathcal{E}_{0,m}(\Omega)$ and $\psi\in \mathcal{SH}_m^-(\Omega)$, then $\max(\varphi,\psi)\in\mathcal{E}_{0,m}(\Omega)$.
  \item If $\varphi\in\mathcal{F}_m(\Omega)$ then $\int_{\Omega}(dd^c\varphi)^p\wedge\beta^{n-p}<+\infty$.
\end{itemize}
\end{prop}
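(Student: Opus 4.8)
\emph{The plan} is to dispose of the first assertion by unwinding the definitions, and to reduce the remaining two items to the integration-by-parts calculus for the complex Hessian operator (Chern--Levine--Nirenberg and Cegrell type estimates) applied to the bounded approximants that define the Cegrell classes. For the inclusions $\mathcal{E}_{0,m}(\Omega)\subset\mathcal{F}_m(\Omega)\subset\mathcal{E}_m(\Omega)$, given $\varphi\in\mathcal{E}_{0,m}(\Omega)$ I would use the constant sequence $\varphi_j=\varphi$: it lies in $\mathcal{E}_{0,m}(\Omega)$, decreases to $\varphi$, and satisfies $\sup_j\int_\Omega(dd^c\varphi_j)^m\wedge\beta^{n-m}=\int_\Omega(dd^c\varphi)^m\wedge\beta^{n-m}<+\infty$, whence $\varphi\in\mathcal{F}_m(\Omega)$. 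The second inclusion is then immediate from the definition of $\mathcal{E}_m(\Omega)$, since every $\varphi\in\mathcal{F}_m(\Omega)$ coincides locally with itself; equivalently, the global approximants witnessing $\varphi\in\mathcal{F}_m(\Omega)$ restrict to witness the local requirement on each relatively compact subset.

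For the stability under maximum, set $u=\max(\varphi,\psi)$ with $\varphi\in\mathcal{E}_{0,m}(\Omega)$ and $\psi\in\mathcal{SH}_m^-(\Omega)$. Then $u$ is $m$-sh as a maximum of two $m$-sh functions, and $\varphi\le u\le0$ shows that $u$ is negative and, since $\varphi$ is bounded below, bounded. Because $\varphi(z)\to0$ as $z\to\partial\Omega$, the squeeze $\varphi\le u\le0$ forces $u(z)\to0$ as well, so $u$ has the correct boundary behaviour. The only substantial point is finiteness of the Hessian mass, for which I would invoke the comparison principle: from $u\ge\varphi$ together with the common vanishing boundary values, integration by parts gives
$$\int_\Omega(dd^cu)^m\wedge\beta^{n-m}\le\int_\Omega(dd^c\varphi)^m\wedge\beta^{n-m}<+\infty,$$
so that $u\in\mathcal{E}_{0,m}(\Omega)$.

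For the finiteness of the lower-order masses, fix $\varphi\in\mathcal{F}_m(\Omega)$ and $1\le p\le m$, and let $\varphi_j\in\mathcal{E}_{0,m}(\Omega)$ decrease to $\varphi$ with $M:=\sup_j\int_\Omega(dd^c\varphi_j)^m\wedge\beta^{n-m}<+\infty$; the goal is a bound on $\int_\Omega(dd^c\varphi_j)^p\wedge\beta^{n-p}$ uniform in $j$. Writing $\beta^{n-p}=\beta^{m-p}\wedge\beta^{n-m}$ and choosing an auxiliary $g\in\mathcal{E}_{0,m}(\Omega)$ with $dd^cg\ge\beta$ (available when $\Omega$ is bounded and $m$-hyperconvex), positivity lets one replace each surplus factor $\beta$ by $dd^cg$, after which the mixed Hessian (Cegrell) inequality yields
$$\int_\Omega(dd^c\varphi_j)^p\wedge\beta^{n-p}\le\int_\Omega(dd^c\varphi_j)^p\wedge(dd^cg)^{m-p}\wedge\beta^{n-m}\le M^{p/m}\Big(\int_\Omega(dd^cg)^m\wedge\beta^{n-m}\Big)^{\frac{m-p}{m}}.$$
This bound is independent of $j$; letting $j\to\infty$ and using the continuity of the Hessian operator along decreasing sequences gives $\int_\Omega(dd^c\varphi)^p\wedge\beta^{n-p}<+\infty$.

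The routine facts ($m$-subharmonicity of the maximum, the squeeze at the boundary, membership via the constant sequence) are harmless. \emph{The main obstacle} is the integration-by-parts technology underlying the two nontrivial items: the comparison principle that makes the maximal mass decrease, and the Chern--Levine--Nirenberg / Cegrell mixed-mass inequality (together with the continuity of the Hessian measure under monotone limits) that transfers the uniform bounds from the approximants $\varphi_j$ to $\varphi$. A secondary technical point is the existence of the comparison function $g$ with $dd^cg\ge\beta$, which tacitly requires $\Omega$ to be bounded and $m$-hyperconvex, the standing hypothesis under which these Cegrell classes are defined.
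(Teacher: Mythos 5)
The paper never proves this proposition: it is imported wholesale from \cite{Ha-Za}, so there is no in-paper argument to compare against, and your attempt has to be judged against the standard Cegrell-class arguments. Your first two items are correct and are essentially those standard arguments. For the inclusions, the constant sequence does the job exactly as you say, and $\mathcal{F}_m(\Omega)\subset\mathcal{E}_m(\Omega)$ is indeed definitional. For $u=\max(\varphi,\psi)$, the squeeze $\varphi\le u\le 0$ gives boundedness, negativity and the boundary limit $0$; the only caveat is terminological: what you call ``the comparison principle'' is really the monotonicity of total Hessian masses (if $v\le u\le 0$ and both have boundary values $0$, then $\int_\Omega(dd^cu)^m\wedge\beta^{n-m}\le\int_\Omega(dd^cv)^m\wedge\beta^{n-m}$), which is proved by $m$ successive integrations by parts in which the vanishing boundary values kill the boundary terms. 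Since $u$ does tend to $0$ on $\partial\Omega$, that step is sound.

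The genuine gap is in the third item (where, implicitly, $1\le p\le m$): the auxiliary $g\in\mathcal{E}_{0,m}(\Omega)$ with $dd^cg\ge\beta$. On a ball, $g(z)=|z|^2-R^2$ works and your Hölder step and the monotone-limit passage are then fine; but for a general bounded $m$-hyperconvex $\Omega$ the parenthetical ``(available when $\Omega$ is bounded and $m$-hyperconvex)'' is precisely where the difficulty hides, and it is unproved. Note that $dd^cg\ge\beta$ means $u:=g-|z|^2$ is $m$-sh, while $g\in\mathcal{E}_{0,m}(\Omega)$ forces $u$ to attain the boundary values $-|\zeta|^2$ on $\partial\Omega$; so you are asserting solvability, with boundary regularity, of a Perron--Bremermann problem for this specific datum. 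Hyperconvexity supplies one negative exhaustion, not barriers for arbitrary continuous data (that is a strictly stronger, B-regularity-type hypothesis), and the datum $-|z|^2$ is the worst possible one for barrier constructions: the obvious candidates degenerate ($|z|^2-\sup_\Omega|z|^2$ fails to tend to $0$ on $\partial\Omega$, and $\max(A\rho,|z|^2-R^2)$ with $\rho$ the exhaustion loses $dd^cg\ge\beta$ exactly in the boundary collar). This matters because the collar is the whole content of the statement: finiteness of $\int_K(dd^c\varphi_j)^p\wedge\beta^{n-p}$ on compacts is just Chern--Levine--Nirenberg, and what must be controlled is the mass up to $\partial\Omega$. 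As written, your proof of the third item is complete only when $\Omega$ is a ball; the reference \cite{Ha-Za} gets the general case through weighted integration-by-parts estimates in the spirit of Lemma \ref{lem1} (weights $-h$ with $h\in\mathcal{E}_{0,m}(\Omega)$) rather than through a pointwise Hessian minorant, so you need either an actual construction of $g$ for the domains in question or a replacement of this step by such weighted estimates.
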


\begin{lem}(See \cite{Ha-Za})\label{lem1}
Suppose that $\varphi_1,\dots,\varphi_2\in \mathcal{F}_m(\Omega)$ and $h\in \mathcal{E}_{0,m}(\Omega)$. Then we have
$$\begin{array}{l}
\ds\int_{\Omega}-hdd^c\varphi_1\wedge\dots\wedge dd^c\varphi_m\wedge \beta^{n-m}\\
\leq\ds \left(\int_{\Omega}-h (dd^c\varphi_1)^m\wedge\beta^{n-m}\right)^{\frac{1}{m}}\dots\left(\int_{\Omega}-h(dd^c\varphi_m)^m\wedge\beta^{n-m}\right)^{ \frac{1}{m} }.
\end{array}$$
\end{lem}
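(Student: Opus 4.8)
The plan is to read the inequality as a Hölder--Cauchy--Schwarz estimate for the symmetric $m$-linear functional
\[
\mathcal{M}_h(u_1,\dots,u_m)=\int_\Omega(-h)\,dd^cu_1\wedge\cdots\wedge dd^cu_m\wedge\beta^{n-m},
\]
the claim being $\mathcal{M}_h(\varphi_1,\dots,\varphi_m)\le\prod_{j=1}^m\mathcal{M}_h(\varphi_j,\dots,\varphi_j)^{1/m}$. First I would reduce to smooth, bounded data. By definition of $\mathcal{F}_m(\Omega)$ each $\varphi_j$ is a decreasing limit of functions of $\mathcal{E}_{0,m}(\Omega)$, themselves decreasing limits of smooth $m$-sh functions; since $h\in\mathcal{E}_{0,m}(\Omega)$ is bounded with $-h\ge0$ and the mixed Hessian measures converge weakly along decreasing sequences of bounded $m$-sh functions, both sides pass to the limit. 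Every integrand is a positive measure, so all integrals are well defined in $[0,+\infty]$; the right-hand ones are finite by the defining mass bound of $\mathcal{F}_m(\Omega)$ together with the finiteness $\int_\Omega(dd^c\varphi_j)^p\wedge\beta^{n-p}<+\infty$ from the Proposition above, and finiteness on the left comes for free once the inequality is proved for the smooth approximants and passed to the limit. Hence it suffices to treat smooth bounded $m$-sh functions and a weight $h$ vanishing on $\partial\Omega$.

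The crux is a two-slot Cauchy--Schwarz inequality. Fixing smooth $m$-sh functions in the last $m-2$ slots and setting $R=dd^cu_3\wedge\cdots\wedge dd^cu_m\wedge\beta^{n-m}$, a positive closed current of bidimension $(2,2)$, the assertion is
\[
\int_\Omega(-h)\,dd^cu\wedge dd^cv\wedge R\le\Big(\int_\Omega(-h)(dd^cu)^2\wedge R\Big)^{1/2}\Big(\int_\Omega(-h)(dd^cv)^2\wedge R\Big)^{1/2}.
\]
This cannot be obtained pointwise: the quadratic form on $(1,1)$-forms attached to $R$ is of Lorentzian type, and in fact the mixed density dominates the geometric mean of the pure densities, so the inequality is genuinely global. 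The mechanism I would use is integration by parts, which turns Hessian pairings into gradient pairings: since $R$ and the various $dd^c(\cdot)$ are closed and $h$ vanishes on $\partial\Omega$, all boundary terms drop and one is reduced to an inner product of the form $\int d\alpha\wedge d^c\beta\wedge S$ against a positive closed current $S$. For such a pairing the diagonal $\int d\alpha\wedge d^c\alpha\wedge S=\int\frac{i}{2\pi}\partial\alpha\wedge\overline{\partial}\alpha\wedge S\ge0$ is pointwise nonnegative, irrespective of any convexity, so the pairing is positive semi-definite and ordinary Cauchy--Schwarz applies; a short bootstrap over the auxiliary weights generated by these integrations by parts then yields the displayed estimate.

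Granting the two-slot inequality in every pair of slots with the remaining $m-2$ held fixed, the full estimate follows by the standard iteration. Writing $a_{i_1\cdots i_m}=\mathcal{M}_h(\varphi_{i_1},\dots,\varphi_{i_m})$, the Cauchy--Schwarz step reads $a_{ij\,\ast}^2\le a_{ii\,\ast}\,a_{jj\,\ast}$ for any fixed multi-index $\ast$ on the remaining slots. These log-convexity relations combine, by induction on $m$, into the geometric-mean bound $\mathcal{M}_h(\varphi_1,\dots,\varphi_m)\le\prod_{j=1}^m\mathcal{M}_h(\varphi_j,\dots,\varphi_j)^{1/m}$ (this is the easy direction, requiring only iterated Cauchy--Schwarz and not an Alexandrov--Fenchel inequality), and the approximation of the first step then finishes the proof.

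I expect the two-slot Cauchy--Schwarz step to be the main obstacle. The difficulty is precisely that it is not a pointwise inequality, so one is forced onto the integration-by-parts route and must (i) justify every integration by parts and the vanishing of boundary terms for merely bounded weights $h\in\mathcal{E}_{0,m}(\Omega)$ rather than compactly supported ones, and (ii) control the auxiliary weights produced along the way so that the bootstrap closes. Once these analytic points are secured, the iteration and the approximation are routine.
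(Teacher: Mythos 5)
First, a point of comparison: the paper itself contains no proof of this lemma --- it is imported verbatim, with attribution, from \cite{Ha-Za} --- so there is no internal argument to measure yours against. Judged on its own, your outline follows the standard Cegrell-style energy argument that the cited literature uses for exactly this kind of statement: reduce by approximation to nicer data, establish a two-slot Cauchy--Schwarz inequality with the remaining slots frozen, and iterate the resulting log-convexity relations $a_{ij\ast}^2\le a_{ii\ast}\,a_{jj\ast}$ up to the geometric-mean bound (that last combinatorial step is indeed the easy direction, as you say). Your observation that the inequality cannot be pointwise --- G\aa rding's inequality for $m$-positive forms gives the \emph{reverse} pointwise comparison between the mixed and pure densities --- is also correct and shows you have located the right difficulty.

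The genuine gap is exactly at the step you flag and then leave as a black box: the two-slot inequality with the \emph{same} fixed weight $h$ in all three integrals. Integration by parts does not simply hand you a positive semi-definite pairing: from $\int_\Omega -h\,dd^cu\wedge dd^cv\wedge R$ one passes to $\int_\Omega du\wedge d^ch\wedge dd^cv\wedge R$, and Cauchy--Schwarz for the gradient pairing against the positive current $dd^cv\wedge R$ bounds this by $\left(\int_\Omega du\wedge d^cu\wedge dd^cv\wedge R\right)^{1/2}\left(\int_\Omega dh\wedge d^ch\wedge dd^cv\wedge R\right)^{1/2}$; integrating back by parts turns these factors into $\int_\Omega -u\,dd^cu\wedge dd^cv\wedge R$ and $\int_\Omega -h\,dd^ch\wedge dd^cv\wedge R$, i.e.\ new mixed terms in which $h$ now occupies a Hessian slot and $u$ serves as weight. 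This swapping does not terminate on its own; in the literature the loop is closed by a self-improving-constant iteration (prove the estimate with some constant $A$, feed it into itself so that $A$ gets replaced by $A^{1/2}$, hence by $A^{1/2^k}\to 1$), and this in turn requires knowing in advance that all the intermediate energies are finite --- which is where the hypotheses $\varphi_j\in\mathcal{F}_m(\Omega)$, $h\in\mathcal{E}_{0,m}(\Omega)$ and the defining mass bounds are genuinely used, as well as in justifying that every boundary term vanishes. Calling this ``a short bootstrap'' conceals most of the content of the lemma. A secondary weak point: your reduction to smooth data invokes weak convergence of the mixed Hessian measures, but the weight $-h$ is only a bounded lower semicontinuous function (since $h$ is $m$-sh and negative), not a continuous compactly supported test function, so weak convergence is insufficient; one needs the monotone convergence theorems for Hessian measures in the Cegrell classes to pass both sides to the limit along the decreasing sequences provided by the definition of $\mathcal{F}_m(\Omega)$. (Incidentally, the statement's ``$\varphi_1,\dots,\varphi_2$'' is a typo for $\varphi_1,\dots,\varphi_m$, which your reading correctly assumes.)
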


 \section{Lelong numbers of $m-$subharmonic currents}
The aim of this part is to prove the existence of the Lelong number of $m-$subharmonic currents. The main tool is the Lelong-Jensen formula.
\begin{prop}(Lelong-Jensen formula)
	Let $T$ be a current of bidimension $(p,p)$ on $\Omega$  such that $T$ and $dd^cT$ are of zero order on $\Omega$. Then for every $a\in\Omega$ and $0<r_1<r_2<d(a,\partial\Omega)$, we have
	$$\begin{array}{lcl}
		A(r_1,r_2)&:=&\nu_T(a,r_2)-\nu_T(a,r_1)\\
		&=&\ds\frac{1}{r_2^{\frac{2n}{m}(m+p-n)}}\int_{\mathbb{B}(a,r_2)}T\wedge\beta^p- \frac{1}{r_1^{\frac{2n}{m}(m+p-n)}}\int_{\mathbb{B}(a,r_1)}T\wedge\beta^p\\
		&=&\ds \int_{r_1}^{r_2}\left(\frac{1}{t^{\frac{2n}{m}(m+p-n)}} -\frac{1}{r_2^{\frac{2n}{m}(m+p-n)}}\right)2tdt\int_{\mathbb{B}(a,t)}dd^cT\wedge\beta^{p-1}\\
		&&\ds +\int_0^{r_1}\left(\frac{1}{r_1^{\frac{2n}{m}(m+p-n)}} -\frac{1}{r_2^{\frac{2n}{m}(m+p-n)}}\right)2tdt\int_{\mathbb{B}(a,t)}dd^cT\wedge\beta^{p-1}\\
		& &\ds +\int_{\mathbb{B}(a,r_1,r_2)}T(\xi)\wedge\beta(\xi)^{n-m}\wedge(dd^c\widetilde{\phi}_m(\xi-a))^{m+p-n}.
	\end{array}$$
    where $\widetilde{\phi}_m(\zeta)=\phi_m(|\zeta|)$.
\end{prop}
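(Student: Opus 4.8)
The plan is to read this as a pure Stokes identity — no positivity of $T$ is used, only that $T$ and $dd^cT$ have measure coefficients — with the radial kernel $\widetilde\phi_m$ playing the role of the device that manufactures the normalizing weight $r^{-\frac{2n}{m}(m+p-n)}$. Throughout put $q=m+p-n\geq 0$, assume $a=0$ after a translation, and write $\rho(z)=|z|^2$ so that $\beta=dd^c\rho$. First I would reduce to smooth $T$ by convolution $T_\varepsilon=T*\rho_\varepsilon$: since $T$ and $dd^cT$ are of order zero, $T_\varepsilon\to T$ and $dd^cT_\varepsilon=(dd^cT)*\rho_\varepsilon\to dd^cT$ weakly, the ball integrals converge for almost every radius, and the last term converges because $(dd^c\widetilde\phi_m)^{q}$ is smooth on the closed annulus, away from $0$. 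The engine is the pointwise identity obtained by writing $\widetilde\phi_m=g(\rho)$ with $g(s)=-\frac{1}{(\frac nm-1)}s^{-(\frac nm-1)}$, so that $g'(s)=s^{-n/m}$ and $g''(s)=-\frac nm s^{-n/m-1}$; since $d\rho\wedge d^c\rho$ squares to zero,
$$(dd^c\widetilde\phi_m)^{q}\wedge\beta^{n-m}=(g'\circ\rho)^{q}\beta^{p}+q\,(g'\circ\rho)^{q-1}(g''\circ\rho)\,\beta^{p-1}\wedge d\rho\wedge d^c\rho,$$
whose decisive feature is that $(g'(\rho))^{q}=|z|^{-\frac{2n}{m}q}$ is exactly the weight appearing in $\nu_T$.

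Next I would wedge this identity with $T$, integrate over $\mathbb B(0,r_1,r_2)$, and solve for the weighted mass $\int_{\mathbb B(0,r_1,r_2)}|z|^{-\frac{2n}{m}q}T\wedge\beta^{p}$. Setting $\nu(t)=\int_{\mathbb B(0,t)}T\wedge\beta^{p}$, so that $\nu_T(0,t)=t^{-\frac{2n}{m}q}\nu(t)$, the co-area formula and an integration by parts give
$$\int_{\mathbb B(0,r_1,r_2)}|z|^{-\frac{2n}{m}q}T\wedge\beta^{p}=\int_{r_1}^{r_2}t^{-\frac{2n}{m}q}\,d\nu(t)=\nu_T(0,r_2)-\nu_T(0,r_1)+\frac{2n}{m}q\int_{r_1}^{r_2}t^{-\frac{2n}{m}q-1}\nu(t)\,dt.$$
Hence $A(r_1,r_2)$ equals the annular term $\int_{\mathbb B(0,r_1,r_2)}T\wedge\beta^{n-m}\wedge(dd^c\widetilde\phi_m)^{q}$ plus two remainders: the $\beta^{p-1}\wedge d\rho\wedge d^c\rho$ integral over the annulus, and $-\frac{2n}{m}q\int_{r_1}^{r_2}t^{-\frac{2n}{m}q-1}\nu(t)\,dt$.

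The decisive step is then to trade $T\wedge\beta^{p}$ for $dd^cT\wedge\beta^{p-1}$. Writing $W(t)=\int_{\mathbb B(0,t)}dd^cT\wedge\beta^{p-1}$ and applying Stokes twice on $\mathbb B(0,t)$ to the potential $\rho-t^2$, which vanishes on the sphere, yields $\nu(t)=\int_{\mathbb B(0,t)}(\rho-t^2)\,dd^cT\wedge\beta^{p-1}$ plus a spherical boundary term; the co-area computation
$$\int_{\mathbb B(0,t)}(t^2-\rho)\,dd^cT\wedge\beta^{p-1}=\int_0^t 2s\,W(s)\,ds$$
is exactly what produces the factor $2t\,dt\,W(t)$. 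Substituting this (and the analogous expression for the $\beta^{p-1}\wedge d\rho\wedge d^c\rho$ integral, whose spherical slices combine with the boundary term) into the two remainders and integrating by parts once more in $t$, the resulting double integral splits at the level $t=r_1$ into the two stated pieces $\int_{r_1}^{r_2}\big(t^{-\frac{2n}{m}q}-r_2^{-\frac{2n}{m}q}\big)2t\,W(t)\,dt$ and $\int_0^{r_1}\big(r_1^{-\frac{2n}{m}q}-r_2^{-\frac{2n}{m}q}\big)2t\,W(t)\,dt$, the frozen endpoints $r_1^{-\cdots},r_2^{-\cdots}$ reflecting whether $t$ lies inside or outside the annulus.

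The main obstacle is precisely this final bookkeeping: tracking the boundary contributions on the spheres $\partial\mathbb B(0,t)$ created by the successive applications of Stokes, and verifying that together with the $d\rho\wedge d^c\rho$ integral they reassemble, with no leftover, into the two iterated integrals of $W$. The only other delicate point is integrability near $a$, where $(dd^c\widetilde\phi_m)^{q}$ is singular; this is avoided by performing every integration on the annulus $\mathbb B(0,r_1,r_2)$, on which the kernel is smooth, so that the singular locus $\{z=a\}$ never intervenes.
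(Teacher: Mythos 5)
Your plan is correct, and the final bookkeeping you flag as the main obstacle does close with no leftover: with $q'=\tfrac{2n}{m}(m+p-n)$, the radial slicing factor of your $d\rho\wedge d^c\rho$ term is $-q\,(g'(t^2))^{q-1}g''(t^2)\,2t=q'\,t^{-q'-1}$; the $\nu(t)$-parts of the spherical slices $\int_{\mathbb{S}(t)}T\wedge d^c\rho\wedge\beta^{p-1}=\nu(t)+\int_0^t 2sW(s)\,ds$ cancel exactly against your Stieltjes remainder $-q'\int_{r_1}^{r_2}t^{-q'-1}\nu(t)\,dt$; and one last integration by parts applied to $q'\int_{r_1}^{r_2}t^{-q'-1}\bigl(\int_0^t 2sW(s)\,ds\bigr)dt$ produces precisely $\int_{r_1}^{r_2}(t^{-q'}-r_2^{-q'})2tW(t)\,dt+(r_1^{-q'}-r_2^{-q'})\int_0^{r_1}2tW(t)\,dt$. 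Your route is a genuine reorganization of the paper's proof rather than a different argument: the paper runs the computation in the opposite direction, starting from $\int_{r_1}^{r_2}t^{-q'}2t\,W(t)\,dt$, replacing $t^{-q'}\beta^{p-1}$ on each sphere by $(dd^c\widetilde\phi_m)^{q}\wedge\beta^{n-m-1}$ (so the $d\rho\wedge d^c\rho$ part of the kernel dies by restriction to spheres and never appears), then performing a spatial Stokes on the annulus and evaluating the resulting sphere boundary terms by a second Stokes identity --- which is exactly your identity for $\int_{\mathbb{S}(t)}T\wedge d^c\rho\wedge\beta^{p-1}$, obtained there too via $d\rho\wedge d^cT=dT\wedge d^c\rho$. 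So both proofs rest on the same two pillars (the binomial structure of $(dd^c\widetilde\phi_m)^q$ killed by $(d\rho\wedge d^c\rho)^2=0$, and that sphere identity); what your version buys is that the normalizing weight $|z|^{-q'}$ and the cancellation mechanism are visible pointwise at the level of forms, whereas the paper hides them in boundary terms; what the paper's version buys is that the singular correction term never has to be carried along.

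One step you should still write out to match the stated generality: regularization justifies the identity only for radii whose spheres are not charged by $\|T\|$ or $\|dd^cT\|$ (your ``almost every radius''). Since the proposition claims the formula for \emph{every} $0<r_1<r_2<d(a,\partial\Omega)$, you must, as the paper does with its at most countable exceptional set $E_T$, approximate arbitrary $r_1,r_2$ by good radii and pass to the limit, using the one-sided continuity of $t\mapsto\nu(t)$ and $t\mapsto W(t)$ on open balls. This is routine but is part of the claim, and your radial Stieltjes step (atoms of the pushforward measure at charged radii) is exactly where it would otherwise bite.
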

\begin{proof}
	Without loss of generality, we can assume that $a=0$. We use $\mathbb{B}(r)$ and $\mathbb{B}(r_1,r_2)$ instead of $\mathbb{B}(0,r)$ and $\mathbb{B}(0,r_1,r_2)$. We set $\mathbb{S}(r)=\partial\mathbb{B}(r)$.\\
Suppose first that $T$ is of class $\mathcal{C}^2$. Then thanks to Stokes formula, we have
	\begin{equation}\label{lj1}
\begin{array}{ll}
    &\ds \int_{r_1}^{r_2}\frac{2tdt}{t^{\frac{2n}{m}(m+p-n)}} \int_{\mathbb{B}(t)}dd^cT\wedge\beta^{p-1}\\
	=&\ds \int_{r_1}^{r_2}\frac{2tdt}{t^{\frac{2n}{m}(m+p-n)}}\int_{\mathbb{S}(t)}d^cT\wedge\beta^{p-1}\\
	=&\ds\int_{r_1}^{r_2}2tdt\int_{\mathbb{S}(t)}d^cT\wedge(dd^c\widetilde{\phi}_m)^{m+p-n}\wedge\beta^{n-m-1}\\
	=&\ds \int_{\mathbb{B}(r_1,r_2)}d|z|^2\wedge d^cT\wedge(dd^c\widetilde{\phi}_m)^{m+p-n}\wedge\beta^{n-m-1}\\
	=&\ds \int_{\mathbb{B}(r_1,r_2)}dT\wedge d^c|z|^2\wedge(dd^c\widetilde{\phi}_m)^{m+p-n}\wedge\beta^{n-m-1}\\
	=&\ds \int_{\mathbb{B}(r_1,r_2)}d\left(T\wedge d^c|z|^2\wedge(dd^c\widetilde{\phi}_m)^{m+p-n}\wedge\beta^{n-m-1}\right)\\
    &-\ds\int_{\mathbb{B}(r_1,r_2)}T\wedge(dd^c\widetilde{\phi}_m)^{m+p-n}\wedge\beta^{n-m}\\	
	=&\ds \int_{\mathbb{S}(r_2)}T\wedge d^c|z|^2\wedge(dd^c\widetilde{\phi}_m)^{m+p-n}\wedge\beta^{n-m-1}\\
	&-\ds \int_{\mathbb{S}(r_1)}T\wedge d^c|z|^2\wedge(dd^c\widetilde{\phi}_m)^{m+p-n}\wedge\beta^{n-m-1}\\
	&-\ds\int_{\mathbb{B}(r_1,r_2)}T\wedge(dd^c\widetilde{\phi}_m)^{m+p-n}\wedge\beta^{n-m}	
	\end{array}
\end{equation}

	A simple computation shows that
	\begin{equation}\label{lj2}
	\begin{array}{ll}
	\ds \int_{\mathbb{S}(r)}T\wedge d^c|z|^2\wedge(dd^c\widetilde{\phi}_m)^{m+p-n}\wedge\beta^{n-m-1}\\
	= \ds \frac{1}{r^{\frac{2n}{m}(m+p-n)}}\int_{\mathbb{S}(r)}T\wedge d^c|z|^2\wedge\beta^{p-1}\\ =\ds\frac{1}{r^{\frac{2n}{m}(m+p-n)}}\int_0^r2tdt\int_{\mathbb{B}(t)}dd^cT\wedge\beta^{p-1}+\frac{1}{r^{\frac{2n}{m}(m+p-n)}}\int_{\mathbb{B}(t)}T\wedge\beta^p.
	\end{array}
	\end{equation}
	The result follows from Equalities (\ref{lj1}) and (\ref{lj2}).\\
	If $T$ is not of class $\mathcal C^2$, we consider the set
	$$E_T:=\left\{r>0;\ ||T||(\mathbb{S}(r))\not=0 \hbox{ or } ||dd^cT||(\mathbb{S}(r))\not=0\right\}.$$
	As $T$ and $dd^cT$ are of zero orders, then $E_T$ is at least countable. Let $(\rho_\varepsilon)_\varepsilon$ be a regularizing kernel and $r\in\mathbb{R}\smallsetminus E_T$, then we have
	$$\lim_{\varepsilon\to 0}\int_{\mathbb{B}(r)}T*\rho_\varepsilon\wedge\beta^p =\lim_{\varepsilon\to 0}\int_{\mathbb{C}^n}\1_{\mathbb{B}(r)}T*\rho_\varepsilon\wedge\beta^p=\int_{\mathbb{B}(r)}T\wedge\beta^p.$$
	It follows that if $0<r_1<r_2$ are two values outside $E_T$ then the result is checked by regularization. If $r_1$ or $r_2$ is in $E_T$, it suffices to take two sequences $(r_{1,j})_j$ and  $(r_{2,j})_j$ in $\mathbb{R}\smallsetminus E_T$ which tend respectively to $r_1$ and $r_2$ and apply the previous step with $r_{1,j}$ and  $r_{2,j}$. The result follows by passing to the limit when $j\to+\infty$.	
\end{proof}
As a consequence, we have:
\begin{theo}\label{th1}
	Let $T$ be a current of bidimension $(p,p)$ on $\Omega$. Assume that $T$ is $m-$positive  and $dd^cT\wedge\beta^{p-1}$ is a positive measure  on $\Omega$ with $m+p\geq n$. Then the Lelong number of $T$ exists at every point of $\Omega$.
\end{theo}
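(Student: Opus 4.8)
The plan is to use the Lelong--Jensen formula to prove that the function $r\mapsto\nu_T(a,r)$ is nondecreasing on $]0,d(a,\partial\Omega)[$ and bounded below by $0$; a monotone, bounded function admits a limit as $r\to0^+$, and this is exactly the assertion that $\nu_T(a)$ exists. First I would note that the hypotheses make the formula applicable: an $m$-positive current is of order zero, and $dd^cT\wedge\beta^{p-1}$ is assumed to be a positive measure. Fix $a\in\Omega$ and $0<r_1<r_2<d(a,\partial\Omega)$ and write $A(r_1,r_2)=\nu_T(a,r_2)-\nu_T(a,r_1)$ as the sum of the three terms produced by the formula. It then suffices to show that each of them is nonnegative. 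Throughout set $e=\frac{2n}{m}(m+p-n)$, which is $\geq0$ precisely because $m+p\geq n$.

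The first two terms are settled by an elementary sign analysis. For $t\in\,]r_1,r_2[$ one has $t<r_2$, hence $t^{e}\leq r_2^{e}$ and $\frac{1}{t^{e}}-\frac{1}{r_2^{e}}\geq0$; likewise $r_1<r_2$ gives $\frac{1}{r_1^{e}}-\frac{1}{r_2^{e}}\geq0$. Since $dd^cT\wedge\beta^{p-1}$ is a positive measure, each inner integral $\int_{\mathbb{B}(a,t)}dd^cT\wedge\beta^{p-1}$ is nonnegative, and the factor $2t\,dt$ is positive; hence both terms are nonnegative.

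The crucial point is the third term $\int_{\mathbb{B}(a,r_1,r_2)}T\wedge\beta^{n-m}\wedge(dd^c\widetilde{\phi}_m(\cdot-a))^{m+p-n}$. By the definition of $m$-positivity of $T$, this is nonnegative as soon as the smooth $(m+p-n,m+p-n)$-form $(dd^c\widetilde{\phi}_m)^{m+p-n}$ is strongly $m$-positive on the annulus $\mathbb{B}(a,r_1,r_2)$ (which avoids the singular point $a$); being a wedge power of a single $(1,1)$-form, it is strongly $m$-positive provided $dd^c\widetilde{\phi}_m$ is an $m$-positive $(1,1)$-form there. This is the main obstacle, and I would resolve it by direct computation. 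Writing $\rho=|\zeta|^2$ and $\widetilde{\phi}_m(\zeta)=f(\rho)$ with $f(\rho)=-\frac{m}{n-m}\rho^{1-\frac{n}{m}}$, the radial-form identity gives $dd^c\widetilde{\phi}_m=f'(\rho)\beta+f''(\rho)\frac{i}{2\pi}\partial\rho\wedge\overline{\partial}\rho$, where $f'(\rho)=\rho^{-n/m}>0$ and $f''(\rho)=-\frac{n}{m}\rho^{-n/m-1}<0$. By unitary invariance one diagonalizes at $\zeta=(\sqrt\rho,0,\dots,0)$, where $dd^c\widetilde{\phi}_m$ has eigenvalues relative to $\beta$ equal to $\lambda_1=\frac{m-n}{m}\rho^{-n/m}<0$ once and $\lambda_2=\rho^{-n/m}>0$ with multiplicity $n-1$. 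The $m$-positivity is equivalent to the nonnegativity of the elementary symmetric functions $S_j$ of these eigenvalues for $1\leq j\leq m$, and a short computation yields $S_j=\rho^{-jn/m}\binom{n-1}{j-1}\frac{n(m-j)}{mj}\geq0$ exactly when $j\leq m$. Hence $dd^c\widetilde{\phi}_m$ is $m$-positive on $\mathbb{C}^n\setminus\{0\}$, so the third term is nonnegative.

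Combining the three estimates gives $A(r_1,r_2)\geq0$, i.e. $\nu_T(a,\cdot)$ is nondecreasing. Finally $\nu_T(a,r)\geq0$ for every $r$, because $T\wedge\beta^p=(T\wedge\beta^{n-m})\wedge\beta^{m+p-n}$ is a positive measure: $\beta^{m+p-n}$ is strongly $m$-positive as a power of the $m$-positive form $\beta$, and $T$ is $m$-positive. A nondecreasing function bounded below by $0$ has a finite limit as $r\to0^+$, equal to its infimum; therefore $\nu_T(a)=\lim_{r\to0^+}\nu_T(a,r)$ exists at every $a\in\Omega$. The one genuinely nontrivial ingredient is the strong $m$-positivity of $(dd^c\widetilde{\phi}_m)^{m+p-n}$, which is exactly the eigenvalue computation carried out above.
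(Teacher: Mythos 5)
Your proof is correct and takes essentially the same route as the paper: the paper likewise deduces that $\nu_T(a,\cdot)$ is increasing from the Lelong--Jensen formula and combines this with its positivity (from the $m$-positivity of $T$) to obtain the limit. The only difference is that you spell out the details the paper leaves implicit — the sign analysis of the two kernel terms and the verification, via the eigenvalues $\lambda_1=\frac{m-n}{m}\rho^{-n/m}$, $\lambda_2=\rho^{-n/m}$ and $S_j=\rho^{-jn/m}\binom{n-1}{j-1}\frac{n(m-j)}{mj}\geq0$ for $j\leq m$, that $dd^c\widetilde{\phi}_m$ is $m$-positive off the origin, so that the boundary term $\int T\wedge\beta^{n-m}\wedge(dd^c\widetilde{\phi}_m)^{m+p-n}$ is nonnegative — and these computations are accurate.
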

This result is due to Wan and Wang for $T=dd^c\varphi$ where $\varphi$ is an $m-$sh function.
\begin{proof}
Since $T$ is an $m-$positive  current on $\Omega$ then $\nu_T(a,\centerdot)$ is positive. Moreover, thanks to Lelong-Jensen formula, $\nu_T(a,\centerdot)$ is increasing. It follows that its limit $\nu_T(a)$ when $r$ tends to $0$ exists.
\end{proof}

The case of $m-$negative $m-$subharmonic currents is so different to the previous case (of $m-$positive $m-$subharmonic currents). Indeed, let $T_0:=\widetilde{\phi}_m(dd^c\widetilde{\phi}_m)^{m-1}$; it is not hard to see that $T_0$ is $m-$negative $m-$subharmonic current  of bidimension $(n-m+1,n-m+1)$ on $\Bbb C^n$ with $dd^cT_0\wedge\beta^{n-m}=\delta_0$ and $\nu_{T_0}(r):=\nu_{T_0}(0,r)=\frac{c_n}{r^{2(\frac{n}{m}-1)}}$ for some constant $c_n<0$. Thus the Lelong number of $T_0$ at $0$ doesn't exist.\\
It follows that it is legitimate to impose a condition on an $m-$negative $m-$subharmonic current to ensure the existence of its Lelong number, this will be the aim of Theorem \ref{th3}. But before giving  such a  condition, we may study the local behavior of the Lelong function associated to such a current.
\begin{lem}
Let $T$ be an $m$-negative $m-$subharmonic current of bidimension $(p,p)$ on $\Omega$ with $m+p-1\geq n$. Then for every $a\in\Omega$ and $0<r_0<d(a,\partial\Omega)$, there exists $c_0<0$ such that for any $0<r\leq r_0$ we have:
$$\nu_T(a,r)\geq\frac{\nu_{dd^cT}(a,r_0)}{1-\frac{n}{m}}r^{2(1-\frac{n}{m})}+c_0$$
\end{lem}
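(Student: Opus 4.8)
The plan is to read the estimate off the Lelong--Jensen formula. Assume $a=0$ and apply that formula with $r_1=r$ and $r_2=r_0$, so that $\nu_T(0,r)=\nu_T(0,r_0)-A(r,r_0)$, where $A(r,r_0)$ is the sum of the three integral terms there. Proving the stated lower bound on $\nu_T(0,r)$ is therefore equivalent to producing an \emph{upper} bound of the shape $A(r,r_0)\le -\frac{\nu_{dd^cT}(0,r_0)}{1-\frac nm}\,r^{2(1-\frac nm)}+C$ with $C$ independent of $r$; one then takes $c_0:=\nu_T(0,r_0)-C$ and, after enlarging $C$ if necessary, makes it negative.

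The key monotonicity input comes from Theorem \ref{th1} applied to the current $dd^cT$. Indeed $dd^cT$ is $m$-positive of bidimension $(p-1,p-1)$ and $dd^c(dd^cT)\wedge\beta^{p-2}=0$ is a (trivially positive) measure, while the hypothesis $m+p-1\ge n$ is exactly what is needed for Theorem \ref{th1} to apply to $dd^cT$. Hence $\nu_{dd^cT}(0,\cdot)$ is increasing, which yields the crucial mass bound $\int_{\mathbb{B}(0,t)}dd^cT\wedge\beta^{p-1}=t^{\frac{2n}m(m+p-n-1)}\,\nu_{dd^cT}(0,t)\le t^{\frac{2n}m(m+p-n-1)}\,\nu_{dd^cT}(0,r_0)$ for every $0<t\le r_0$, the exponent being nonnegative.

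Next I would estimate the three terms of $A(r,r_0)$. Substituting the mass bound into the first term, the weight $t^{-\frac{2n}m(m+p-n)}$ turns the radial integrand into a multiple of $t^{1-\frac{2n}m}$, whose primitive is precisely the multiple of $r^{2(1-\frac nm)}$ that produces the announced blow-up, with coefficient governed by $\nu_{dd^cT}(0,r_0)$; the piece carried by the factor $r_0^{-\frac{2n}m(m+p-n)}$ contributes only a bounded quantity. For the last (annular) term I would use that $T$ is $m$-negative while $(dd^c\widetilde{\phi}_m)^{m+p-n}$ is strongly $m$-positive away from the origin, so the measure $T\wedge\beta^{n-m}\wedge(dd^c\widetilde{\phi}_m)^{m+p-n}$ is $\le 0$ and this term may simply be discarded from an upper bound for $A$. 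Collecting the bounded contributions into a single constant and decreasing it gives $c_0<0$.

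The main obstacle is the middle term $\int_0^{r}\big(r^{-\frac{2n}m(m+p-n)}-r_0^{-\frac{2n}m(m+p-n)}\big)2t\,dt\int_{\mathbb{B}(0,t)}dd^cT\wedge\beta^{p-1}$: after inserting the mass bound it \emph{a priori} also scales like $r^{2(1-\frac nm)}$ rather than staying bounded, so one must exploit the monotonicity of $t\mapsto\nu_{dd^cT}(0,t)$ near $0$ sharply, together with the sign of the annular term, to keep it from altering the coefficient of $r^{2(1-\frac nm)}$; this is the delicate point where the precise constant is decided. The second technical ingredient, needed to discard the annular term, is to verify that $(dd^c\widetilde{\phi}_m)^{m+p-n}$ is indeed strongly $m$-positive on $\mathbb{B}(0,r,r_0)$, which follows from the fact that $\widetilde{\phi}_m$ is the radial $m$-Hessian fundamental solution, so that $dd^c\widetilde{\phi}_m$ is $m$-positive there and each of its powers is a single wedge of $m$-positive $(1,1)$-forms.
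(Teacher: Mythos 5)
Your outline reproduces the paper's own strategy: the Lelong--Jensen formula between $r$ and $r_0$, monotonicity of $\nu_{dd^cT}(0,\cdot)$ obtained by applying Theorem \ref{th1} to the closed $m$-positive current $dd^cT$ (your verification of its hypotheses, including $m+(p-1)\geq n$, is exactly the paper's), and discarding the annular term because $T\wedge\beta^{n-m}\wedge(dd^c\widetilde{\phi}_m)^{m+p-n}\leq 0$. But as written it is not a proof: the middle term
\[
2\left(r^{-A}-r_0^{-A}\right)\int_0^{r}t^{B+1}\,\nu_{dd^cT}(0,t)\,dt,
\qquad A=\tfrac{2n}{m}(m+p-n),\quad B=A-\tfrac{2n}{m},
\]
is left unestimated, and the statement stands or falls with it. Monotonicity only gives $\int_0^r t^{B+1}\nu_{dd^cT}(0,t)\,dt\leq \nu_{dd^cT}(0,r_0)\,r^{B+2}/(B+2)$, so this term is of the \emph{same} order $r^{2(1-\frac nm)}$ as the first one and adds $\frac{2}{B+2}\,\nu_{dd^cT}(0,r_0)$ to the coefficient; your announced bound with the unaltered coefficient does not follow. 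The paper handles precisely this point differently: it subtracts the model term first, setting $\Upsilon_T(r)=\nu_T(r)-\frac{\nu_{dd^cT}(r_0)}{1-\frac nm}r^{2(1-\frac nm)}$, regroups the Lelong--Jensen identity so that the middle contributions appear as $2f(r_2)-2f(r_1)$ with $f(r)=-r^{-A}\int_0^r t^{B+1}\nu_{dd^cT}(t)\,dt$, claims $f'\leq 0$ almost everywhere, concludes that $\Upsilon_T$ decreases, and takes $c_0=\min(0,\Upsilon_T(r_0))$.

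Your instinct that ``the precise constant is decided'' at this step is, however, well founded, and you should not try to force the stated coefficient. In the paper's estimate of $f'$ the denominator $\frac{2n}{m}(m+p-n)+2$ should be $B+2=\frac{2n}{m}(m+p-1-n)+2$, and since $m<n$ one has $\frac{A}{B+2}>1$, so the sign $f'\leq 0$ does not follow; indeed, when $\nu_{dd^cT}$ is constant, $f(r)$ is a negative multiple of $r^{2(1-\frac nm)}$ and is strictly \emph{increasing}. The paper's own example $T_0=\widetilde{\phi}_m(dd^c\widetilde{\phi}_m)^{m-1}$ (so $p=n-m+1$, $\nu_{dd^cT_0}\equiv 1$, and the annular term vanishes since $(dd^c\widetilde{\phi}_m)^m\wedge\beta^{n-m}=\delta_0$) makes this quantitative: the Lelong--Jensen formula yields $\nu_{T_0}(r)=\frac nm\cdot\frac{1}{1-\frac nm}\,r^{2(1-\frac nm)}$, which violates $\nu_{T_0}(r)\geq\frac{1}{1-\frac nm}r^{2(1-\frac nm)}+c_0$ for every constant $c_0$ as $r\to 0^+$. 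So the middle term genuinely alters the coefficient (by the factor $\frac nm$ in this example), and the gap you flagged cannot be closed in the form you propose. What your argument does prove, once you insert the crude bound above and keep the annular term with its sign, is
\[
\nu_T(0,r)\;\geq\;\Bigl(\frac{1}{1-\frac nm}-\frac{2}{B+2}\Bigr)\nu_{dd^cT}(0,r_0)\,r^{2(1-\frac nm)}+c_0,
\]
which is sharp for $T_0$ and still gives the qualitative conclusion the lemma is used for (growth of $\nu_T(0,\cdot)$ at most of order $r^{2(1-\frac nm)}$), but not the lemma's literal constant.
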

\begin{proof}
Without loss of generality, we can assume that $a=0$. For $r\leq r_0$ we set:
$$\Upsilon_T(r)=\nu_T(r)-\frac{\nu_{dd^cT}(r_0)}{1-\frac{n}{m}}r^{2(1-\frac{n}{m})}$$
 Thanks to Lelong-Jensen formula, for any $r_1<r_2\leq r_0$, one has:
$$\begin{array}{ll}
 & \Upsilon_T(r_2)-\Upsilon_T(r_1)\\
 =&\ds \nu_T(r_2)-\nu_T(r_1)-\frac{\nu_{dd^cT}(r_0)}{1-\frac{n}{m}}\left(r_2^{2(1-\frac{n}{m})}-r_1^{2(1-\frac{n}{m})}\right) \\
 =&\ds 2\int_{r_1}^{r_2}\left(\frac{1}{t^{\frac{2n}{m}(m+p-n)}}-\frac{1}{r_2^{\frac{2n}{m}(m+p-n)}}\right) t^{\frac{2n}{m}(m+p-1-n)+1}\nu_{dd^cT}(t)dt\\
   &+\ds 2\int_0^{r_1}\left(\frac{1}{r_1^{\frac{2n}{m}(m+p-n)}}-\frac{1}{r_2^{\frac{2n}{m}(m+p-n)}}\right) t^{\frac{2n}{m}(m+p-1-n)+1}\nu_{dd^cT}(t)dt\\
   &+\ds\int_{\Bbb B(r_1,r_2)}T\wedge\beta^{n-m}\wedge(dd^c\widetilde{\phi}_m)^{m+p-n}-\frac{\nu_{dd^cT}(r_0)}{1-\frac{n}{m}}\left(r_2^{2(1-\frac{n}{m})}-r_1^{2(1-\frac{n}{m})}\right)\\
   =&\ds\int_{\Bbb B(r_1,r_2)}T\wedge\beta^{n-m}\wedge(dd^c\widetilde{\phi}_m)^{m+p-n}-\frac{\nu_{dd^cT}(r_0)}{1-\frac{n}{m}}\left(r_2^{2(1-\frac{n}{m})}-r_1^{2(1-\frac{n}{m})}\right)\\
    &+\ds 2\int_{r_1}^{r_2}t^{1-\frac{2n}{m}}\nu_{dd^cT}(t)dt-2\ds\int_0^{r_2}\frac{t^{\frac{2n}{m}(m+p-1-n)+1}}{r_2^{\frac{2n}{m}(m+p-n)}}\nu_{dd^cT}(t)dt\\
   &+2\ds\int_0^{r_1}\frac{t^{\frac{2n}{m}(m+p-1-n)}}{r_1^{\frac{2n}{m}(m+p-n)+1}}\nu_{dd^cT}(t)dt\\
   =&\ds\int_{\Bbb B(r_1,r_2)}T\wedge\beta^{n-m}\wedge(dd^c\widetilde{\phi}_m)^{m+p-n}+2\ds\int_{r_1}^{r_2}(\nu_{dd^cT}(t)-\nu_{dd^cT}(r_0))t^{-\frac{2n}{m}+1}dt\\
    &-2\ds\int_0^{r_2}\frac{t^{\frac{2n}{m}(m+p-1-n)+1}}{r_2^{\frac{2n}{m}(m+p-n)}}\nu_{dd^cT}(t)dt +2\ds\int_0^{r_1}\frac{t^{\frac{2n}{m}(m+p-1-n)+1}}{r_1^{\frac{2n}{m}(m+p-n)}}\nu_{dd^cT}(t)dt\leq 0.
\end{array}$$
Indeed, since $T$ is $m-$negative then $T\wedge\beta^{n-m}\wedge(dd^c\widetilde{\phi}_m)^{m+p-n}$ is a negative measure so
$$\int_{\Bbb B(r_1,r_2)}T\wedge\beta^{n-m}\wedge(dd^c\widetilde{\phi}_m)^{m+p-n}\leq 0.$$
Moreover, as $dd^cT$ is an $m-$positive closed current, then thanks to Theorem \ref{th1}, $\nu_{dd^cT}$ is an  increasing function on $]0,r_0]$. Hence we have
$$\int_{r_1}^{r_2}(\nu_{dd^cT}(t)-\nu_{dd^cT}(r_0))t^{-\frac{2n}{m}+1}dt\leq 0.$$
Furthermore, if we set $$f(r)=-\frac{1}{r^{\frac{2n}{m}(m+p-n)}}\ds\int_0^rt^{\frac{2n}{m}(m+p-1-n)+1}\nu_{dd^cT}(t)dt$$
then $f$ is an absolutely continuous function on $]0,r_0]$ and satisfies:
$$\begin{array}{lcl}
f'(r)&=&\ds\frac{\frac{2n}{m}(m+p-n)}{r^{\frac{2n}{m}(m+p-n)+1}}\ds\int_0^r t^{\frac{2n}{m}(m+p-1-n)+1}\nu_{dd^cT}(t)dt-r^{-\frac{2n}{m}+1}\nu_{dd^cT}(r)\\
     &\leq&\ds \frac{\frac{2n}{m}(m+p-n)}{\frac{2n}{m}(m+p-n)+2} r^{1-\frac{2n}{m}}\nu_{dd^cT}(r)-r^{1-\frac{2n}{m}}\nu_{dd^cT}(r)\leq0
\end{array}$$
for almost every $0<r<r_0$. As a consequence, $\Upsilon_T$ is a decreasing function on $]0,r_0]$, thus $\Upsilon_T(r)\geq\Upsilon_T(r_0)$  for  every $0<r\leq r_0$. We conclude that we have for  every $0<r\leq r_0$,
$$\nu_T(r)\geq\Upsilon_T(r_0)+\nu_{dd^cT}(r_0)\frac{r^{2(1-\frac{n}{m})}}{1-\frac{n}{m}}.$$
The result follows by choosing for example $c_0=\min(0,\Upsilon_T(r_0))$.
 \end{proof}
\begin{theo}\label{th3}
Let $T$ be an $m-$negative  $m-$subharmonic current of bidimension $(p,p)$ on $\Omega$. Assume that $t\longmapsto t^{-\frac{2n}{m}+1}\nu_{dd^cT}(z_0,t)$ is integrable in  neighborhood of $0$ for $z_0\in\Omega$. Then the Lelong number $\nu_T(z_0)$ of $T$ at $z_0$ exists.
\end{theo}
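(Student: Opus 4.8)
The plan is to set $z_0=0$ and run the Lelong--Jensen formula exactly as in the preceding lemma, but organized so that the genuinely divergent pieces collapse into a single monotone function while the remaining pieces are tamed by the integrability hypothesis. Write $\nu(t):=\nu_{dd^cT}(0,t)$, which is nonnegative and increasing by Theorem~\ref{th1}, since $dd^cT$ is $m$-positive and closed. I introduce the two auxiliary functions
$$P(r)=2\int_0^r t^{1-\frac{2n}{m}}\nu(t)\,dt,\qquad Q(r)=\frac{2}{r^{\frac{2n}{m}(m+p-n)}}\int_0^r t^{\frac{2n}{m}(m+p-1-n)+1}\nu(t)\,dt.$$
Here $P$ is finite near $0$ \emph{precisely} by the integrability assumption, and $Q$ is finite because its integrand carries the exponent $\frac{2n}{m}(m+p-1-n)+1\geq 1$.

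First I would rewrite $\nu_T(r_2)-\nu_T(r_1)=I_1+I_2+I_3$ (the three terms of the formula), substituting $\int_{\mathbb{B}(t)}dd^cT\wedge\beta^{p-1}=t^{\frac{2n}{m}(m+p-1-n)}\nu(t)$. The key algebraic step is to combine the two integral terms: after expanding, the ``$1/r_2$-tails'' telescope and one obtains the clean identity
$$I_1+I_2=\bigl(P(r_2)-Q(r_2)\bigr)-\bigl(P(r_1)-Q(r_1)\bigr).$$
Consequently, setting $G(r):=\nu_T(r)-P(r)+Q(r)$, the whole formula reduces to
$$G(r_2)-G(r_1)=\int_{\mathbb{B}(r_1,r_2)}T\wedge\beta^{n-m}\wedge(dd^c\widetilde{\phi}_m)^{m+p-n}.$$
Since $T$ is $m$-negative, the right-hand side is the integral of a negative measure over an annulus, hence $\leq 0$; thus $G$ is nonincreasing in $r$, so $\lim_{r\to 0^+}G(r)=\sup_{r}G(r)=:L$ exists in $]-\infty,+\infty]$.

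It then remains to show that the corrections vanish at $0$ and that $L$ is finite. The integrability of $t\mapsto t^{1-\frac{2n}{m}}\nu(t)$ gives $P(r)\to 0$ directly. For $Q$, the decisive observation is the factorization $t^{\frac{2n}{m}(m+p-1-n)+1}\nu(t)=\bigl(t^{1-\frac{2n}{m}}\nu(t)\bigr)\,t^{\frac{2n}{m}(m+p-n)}$; bounding $t^{\frac{2n}{m}(m+p-n)}\leq r^{\frac{2n}{m}(m+p-n)}$ for $t\leq r$ yields $0\leq Q(r)\leq P(r)$, so $Q(r)\to 0$ as well. (In particular this forces $\nu_{dd^cT}(0)=0$, which is exactly why the obstruction exhibited by $T_0$ above disappears under the hypothesis.) Finally, since $\nu_T(r)\leq 0$ (because $T\wedge\beta^p$ is a negative measure) and $P(r)\geq 0$, we get $G(r)=\nu_T(r)-P(r)+Q(r)\leq Q(r)\to 0$, whence $L\leq 0<+\infty$. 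Therefore $\nu_T(r)=G(r)+P(r)-Q(r)\to L$ as $r\to 0^+$, and $\nu_T(z_0)=L$ exists.

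The main obstacle is the bookkeeping in the second paragraph: making $I_1+I_2$ telescope into $(P-Q)(r_2)-(P-Q)(r_1)$ requires substituting the Lelong function of $dd^cT$ and tracking the exponents $\frac{2n}{m}(m+p-n)$ and $\frac{2n}{m}(m+p-1-n)+1$ carefully, together with the elementary but essential inequality $Q\leq P$ that converts the integrability hypothesis into the vanishing of the divergent $Q$-term. Once the identity for $G$ is established, the monotonicity coming from $m$-negativity and the sign $\nu_T\leq 0$ do the rest.
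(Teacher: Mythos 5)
Your proof is correct and is essentially the paper's own argument: your $G=\nu_T-P+Q$ is exactly the paper's auxiliary function $g(r)=\nu_T(r)+2\int_0^r\bigl((t/r)^{\frac{2n}{m}(m+p-n)}-1\bigr)t^{1-\frac{2n}{m}}\nu_{dd^cT}(t)\,dt$, and both arguments derive $G(r_2)-G(r_1)=\int_{\mathbb{B}(r_1,r_2)}T\wedge\beta^{n-m}\wedge(dd^c\widetilde{\phi}_m)^{m+p-n}\leq 0$ from the Lelong--Jensen formula and then remove the correction terms via the integrability hypothesis (you through $0\leq Q\leq P\to 0$, the paper through dominated convergence using the uniform bound on $(t/r)^{\frac{2n}{m}(m+p-n)}-1$). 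One cosmetic point: your initial justification that $Q$ is finite because the exponent $\frac{2n}{m}(m+p-1-n)+1\geq 1$ requires $m+p\geq n+1$, but your later inequality $Q\leq P$ (valid whenever $m+p\geq n$) already settles the boundary case $m+p=n$, so nothing is lost.
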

\begin{proof}
It suffices to prove the result with $z_0=0$. For every $0<r\leq r_0<d(0,\partial\Omega)$, we set:
$$g(r)= \nu_T(r)+2\ds\int_0^r\left(\frac{t^{\frac{2n}{m}(m+p-n)}}{r^{\frac{2n}{m}(m+p-n)}} -1\right)t^{-\frac{2n}{m}+1}\nu_{dd^cT}(t)dt.$$
    The assumption  implies that the function  $g$ is well defined and negative on $]0,r_0[$.
   Moreover, using the Lelong-Jensen formula, one can prove that for any $0<r_1<r_2\leq r_0$,
   $$g(r_2)-g(r_1)=\ds\int_{\Bbb B(r_1,r_2)}T\wedge\beta^{n-m}\wedge(dd^c\widetilde{\phi}_m)^{m+p-n}\leq0.$$
   It follows that $g$ is a negative decreasing function on $]0,r_0]$, which gives the existence of the limit $$\lim_{r\to 0^+}g(r)=\lim_{r\to 0^+}\nu_T(r)$$ because $t\longmapsto t^{-\frac{2n}{m}+1}\nu_{dd^cT}(t)$ is integrable in  neighborhood of $0$ and $((t/r)^{\frac{2n}{m}(m+p-n)}-1)$ is uniformly bounded.
\end{proof}

\section{Lelong numbers of $m-$subharmonic functions}
 In this particular case we give a new expression of Lelong number of $dd^c\varphi$ using the mean values of the $m-$sh function $\varphi$ on spheres and balls analogous to the case of plurisubharmonic functions; for this reason we set, as usual,
$$\nu_\varphi(a)=\lim_{r\to0^+}\nu_\varphi(a,r):= \ds\lim_{r\to0^+}\frac{1}{r^{\frac{2n}{m}(m-1)}}\ds\int_{\mathbb{B}(a,r)}dd^c\varphi\wedge \beta^{n-1}$$ this number and we consider the mean values of $\varphi$ over the ball and  the sphere respectively:
$$\begin{array}{lcl}
\Lambda(\varphi,a,r)&=&\ds\frac{n!}{\pi^n r^{2n}}\int_{\mathbb{B}(a,r)}\varphi(x)dV(x)\\
\lambda(\varphi,a,r)&=&\ds\frac{(n-1)!}{2\pi^nr^{2n-1}}\int_{\mathbb{S}(a,r)}\varphi(x)d\sigma(x).
\end{array}$$

The  main result of this paper is the following theorem:
 \begin{theo}\label{th2}
	Let $\varphi$ be an $m-$sh function on $\Omega$. Then for any $a\in\Omega$, the Lelong number of $\varphi$ at $a$ is given by the following limits:
    \begin{equation}\label{q1}
    \nu_\varphi(a)=2\lim_{r\to0^+}\frac{\lambda(\varphi,a,r)}{\phi_m(r)}=\frac2n \left(n-\frac{n}{m}+1\right)\lim_{r\to0^+}\frac{\Lambda(\varphi,a,r)}{\phi_m(r)}.
    \end{equation}

	 In particular, if 	$\varphi$ is bounded near $a$ then $\nu_\varphi(a)=0$.
\end{theo}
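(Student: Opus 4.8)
The plan is to funnel everything through the positive trace measure $\mu:=dd^c\varphi\wedge\beta^{n-1}$ and the classical Jensen relation between $\mu$ and the spherical means, and then to turn the resulting radial identity into the ratios in (\ref{q1}) by L'Hôpital's rule. First I would note that $\varphi$ is subharmonic, so $\mu\geq0$ and $\nu_\varphi(a,r)=\mu(\mathbb{B}(a,r))/r^{\frac{2n}{m}(m-1)}$. Applying Theorem~\ref{th1} to the closed $m$-positive current $dd^c\varphi$ of bidimension $(n-1,n-1)$ (for which $dd^c(dd^c\varphi)\wedge\beta^{n-2}=0$, trivially a positive measure, and $m+(n-1)\geq n$) shows that $\nu_\varphi(a,\cdot)$ is nondecreasing, so $\nu_\varphi(a)=\lim_{r\to0^+}\nu_\varphi(a,r)$ exists; this is the number we must identify.

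The engine is the Jensen formula for subharmonic functions: for almost every $r$,
$$\frac{d}{dr}\lambda(\varphi,a,r)=\frac{c}{r^{2n-1}}\,\mu(\mathbb{B}(a,r)),$$
where $c$ is a universal constant fixed by the normalizations $\beta^n=\frac{n!}{\pi^n}\,dV$ and $dd^c\varphi\wedge\beta^{n-1}=\frac{(n-1)!}{4\pi^n}\Delta\varphi\,dV$. I would derive it either from the divergence theorem (differentiating the normalized mean over the unit sphere, then applying Green's identity on $\mathbb{B}(a,r)$) or, staying inside the paper's framework, by specializing the Lelong--Jensen formula to the $(n,n)$-current $T=\varphi$, for which $\nu_T(a,r)=\Lambda(\varphi,a,r)$ and $dd^cT\wedge\beta^{n-1}=\mu$. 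For non-smooth $\varphi$ this requires the regularization $\varphi*\rho_\varepsilon$ (again $m$-sh, since the standard regularization of an $m$-sh function is $m$-sh) together with the countable exceptional-radius argument already used in the proof of the Lelong--Jensen formula. Substituting $\mu(\mathbb{B}(a,t))=t^{\frac{2n}{m}(m-1)}\nu_\varphi(a,t)$ and using the key exponent cancellation
$$\frac{2n}{m}(m-1)-(2n-1)=1-\frac{2n}{m},$$
which is, up to the constant, exactly the exponent in $\phi_m'(r)=2r^{1-\frac{2n}{m}}$, turns the identity into $\lambda'(\varphi,a,r)=\tfrac{c}{2}\,\nu_\varphi(a,r)\,\phi_m'(r)$.

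Then I would pass to the limit. Since $\phi_m(r)\to-\infty$ monotonically as $r\to0^+$ and $\nu_\varphi(a,r)\to\nu_\varphi(a)$, the $\infty/\infty$ form of L'Hôpital (equivalently Stolz--Cesàro on the integrals) gives $\lim_{r\to0^+}\lambda(\varphi,a,r)/\phi_m(r)=\tfrac{c}{2}\,\nu_\varphi(a)$, which is the first equality in (\ref{q1}) once the constant is matched. For the ball means I would use $\Lambda(\varphi,a,r)=\frac{2n}{r^{2n}}\int_0^r t^{2n-1}\lambda(\varphi,a,t)\,dt$ and integrate the asymptotic $\lambda(\varphi,a,t)\sim\tfrac{c}{2}\nu_\varphi(a)\,\phi_m(t)$; the factor $\frac1n\big(n-\frac nm+1\big)$ appears precisely from $\int_0^r t^{2n-1}\phi_m(t)\,dt$, the convergence of which at $0$ is guaranteed by $n-\frac nm+1>0$. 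Finally the case distinction is clean: if $\varphi(a)>-\infty$ then $\lambda(\varphi,a,r)\to\varphi(a)$ is finite while $\phi_m(r)\to-\infty$, so both ratios tend to $0$ and $\nu_\varphi(a)=0$; in particular $\varphi$ bounded near $a$ forces $|\lambda(\varphi,a,r)|$ bounded, hence $\nu_\varphi(a)=0$, which is the last assertion.

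The main obstacle I expect is making the Jensen identity and the passage to the limit rigorous for a genuinely singular $\varphi$ (the case $\varphi(a)=-\infty$): one must justify differentiating the spherical mean of a merely subharmonic function, control the countably many radii where $\|\mu\|(\mathbb{S}(a,r))\neq0$, and verify that the L'Hôpital step is legitimate even though $\lambda(\varphi,a,r)$ itself diverges. Pinning down the universal constant so that it matches the stated coefficients $2$ and $\frac2n\big(n-\frac nm+1\big)$ is then a bookkeeping computation with the normalizations of $d^c$, $\beta$ and the surface and volume measures.
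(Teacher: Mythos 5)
Your proposal follows essentially the same route as the paper: your Jensen identity $\lambda'(\varphi,a,r)=c\,r^{1-2n}\mu(\mathbb{B}(a,r))$, with the exponent cancellation against $\phi_m'(r)=2r^{1-\frac{2n}{m}}$, is exactly the paper's first lemma (proved there by the Green formula); your L'H\^opital/Stolz--Ces\`aro passage to the limit, resting on the monotonicity of $\nu_\varphi(a,\cdot)$, is the content of Lemma \ref{l3}, where the same conclusion is obtained by viewing $\lambda(\varphi,a,r)$ as a convex increasing function of $\phi_m(r)$ and taking the right derivative at the endpoint; and your treatment of the ball means via $\Lambda(\varphi,a,r)=2n\int_0^1 t^{2n-1}\lambda(\varphi,a,rt)\,dt$ with the factor coming from $\int_0^1 t^{2n-\frac{2n}{m}+1}\,dt$ is verbatim the paper's conclusion of the proof. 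The argument is correct, so the only point left to your ``bookkeeping'' is pinning the normalization constant in the Jensen identity to the conventions $d^c=\frac{i}{4\pi}(\overline{\partial}-\partial)$ and $\beta^n=\frac{n!}{\pi^n}dV$ so that it reproduces the stated coefficients.
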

To prove this theorem we need the following lemmas where we prove some more precise results.
 \begin{lem}\label{...}
Let $a\in\Omega$,  and $ 0<r_1<r_2<d(a,\partial\Omega)$. Then
 $$\lambda(\varphi,a,r_2)-\lambda(\varphi,a,r_1)=\ds\frac12\int_{\phi_m(r_1)}^{\phi_m(r_2)}\nu_\varphi(a,\phi_m ^{-1}(t))dt.$$
\end{lem}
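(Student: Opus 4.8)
The plan is to reduce the identity to a one-variable statement for the spherical mean $r\mapsto\lambda(\varphi,a,r)$ and then to integrate it. Translating so that $a=0$, I would first establish the formula when $\varphi$ is of class $\mathcal C^2$ and recover the general case by regularizing with the $m$-sh functions $\varphi*\rho_\varepsilon$, exactly as in the proof of the Lelong-Jensen formula: one proves the smooth identity for radii outside the at most countable set on which $dd^c\varphi\wedge\beta^{n-1}$ charges a sphere, and passes to the limit.

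In the smooth case the heart of the matter is a differential identity expressing $\frac{d}{dr}\lambda(\varphi,0,r)$ through the Lelong function. Writing $\lambda(\varphi,0,r)$ as the mean of $\varphi(r\omega)$ over the unit sphere, I would differentiate under the integral sign to obtain the mean of the radial derivative $\partial_r\varphi$, and then convert the sphere integral of the normal derivative into the ball integral of the real Laplacian by the divergence theorem. Using the pointwise proportionality $dd^c\varphi\wedge\beta^{n-1}=c_n\,\Delta\varphi\,dV$ (the constant $c_n$ being read off by testing on $\varphi=|z|^2$, for which $dd^c\varphi=\beta$), this shows that $\frac{d}{dr}\lambda(\varphi,0,r)$ is a fixed constant multiple of $r^{1-2n}\int_{\mathbb B(0,r)}dd^c\varphi\wedge\beta^{n-1}$.

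The decisive point is then purely a matter of matching exponents. Since
\[
\int_{\mathbb B(0,r)}dd^c\varphi\wedge\beta^{n-1}=r^{\frac{2n}{m}(m-1)}\,\nu_\varphi(0,r),\qquad \phi_m'(r)=2\,r^{1-\frac{2n}{m}},
\]
and $1-2n+\frac{2n}{m}(m-1)=1-\frac{2n}{m}$, the weight $r^{1-2n}\int_{\mathbb B(0,r)}dd^c\varphi\wedge\beta^{n-1}$ is exactly a constant multiple of $\phi_m'(r)\,\nu_\varphi(0,r)$; this is precisely why $\phi_m$ is the right normalizing weight, and collecting the constants gives $\frac{d}{dr}\lambda(\varphi,0,r)=\tfrac12\phi_m'(r)\,\nu_\varphi(0,r)$. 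Integrating from $r_1$ to $r_2$ and carrying out the substitution $t=\phi_m(r)$, which is legitimate because $\phi_m$ is a $\mathcal C^1$ strictly increasing bijection of $\,]0,d(a,\partial\Omega)[\,$ with $\phi_m'>0$, converts $\int_{r_1}^{r_2}\tfrac12\phi_m'(r)\nu_\varphi(0,r)\,dr$ into $\tfrac12\int_{\phi_m(r_1)}^{\phi_m(r_2)}\nu_\varphi(0,\phi_m^{-1}(t))\,dt$, which is the asserted formula.

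The step I expect to be the main obstacle is the passage from the smooth to the general $m$-sh case. One has to show that $\lambda(\varphi*\rho_\varepsilon,0,r)\to\lambda(\varphi,0,r)$ and $\nu_{\varphi*\rho_\varepsilon}(0,r)\to\nu_\varphi(0,r)$ as $\varepsilon\to0$ for all but countably many $r$, which rests on the weak convergence $dd^c(\varphi*\rho_\varepsilon)\wedge\beta^{n-1}\to dd^c\varphi\wedge\beta^{n-1}$ together with control of the spheres that carry mass, and then to exchange this limit with the integral in $t$. The latter is justified by the monotonicity and hence local integrability of $r\mapsto\nu_\varphi(0,r)$ coming from Theorem \ref{th1} applied to $T=dd^c\varphi$. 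Since this is the same approximation argument already carried out for the Lelong-Jensen formula, I would appeal to that reasoning rather than reproduce it.
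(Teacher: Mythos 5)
Your route is essentially the paper's: the differentiate-the-spherical-mean-plus-divergence-theorem step is exactly the Green formula that the paper's proof quotes, and the exponent matching $1-2n+\frac{2n}{m}(m-1)=1-\frac{2n}{m}$ together with the substitution $t=\phi_m(r)$ is identical. The one structural difference, your smooth-case-plus-regularization detour, is unnecessary: the Green formula
$$\lambda(\varphi,a,r)-\lambda(\varphi,a,s)=\frac{(n-1)!}{2\pi^n}\int_s^r t^{1-2n}\left(\int_{\mathbb{B}(a,t)}\Delta\varphi\,dV\right)dt$$
holds for every subharmonic $\varphi$ with $\Delta\varphi$ understood as its Riesz measure, which is how the paper uses it, so no approximation argument (and none of the sphere-mass bookkeeping you anticipate) is needed. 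That extra work is harmless but buys nothing.

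The genuine weak point is the step you dispatch as ``collecting the constants'': it is the only nontrivial bookkeeping in the lemma, and you never perform it. Carrying out your own prescription --- reading the constant off from $\varphi=|z|^2$, for which $dd^c\varphi\wedge\beta^{n-1}=\beta^n=\frac{n!}{\pi^n}dV$ and $\Delta\varphi=4n$ --- gives $dd^c\varphi\wedge\beta^{n-1}=\frac{(n-1)!}{4\pi^n}\Delta\varphi\,dV$, and then the chain you describe yields $\frac{d}{dr}\lambda(\varphi,a,r)=2r^{1-\frac{2n}{m}}\nu_\varphi(a,r)=\phi_m'(r)\,\nu_\varphi(a,r)$, i.e.\ \emph{without} the factor $\frac12$. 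Indeed the identity as stated fails on $\varphi=|z|^2$ itself: there $\lambda(\varphi,a,r_2)-\lambda(\varphi,a,r_1)=r_2^2-r_1^2$, while $\nu_\varphi(a,t)=t^{\frac{2n}{m}}$ makes the right-hand side equal to $\frac12\left(r_2^2-r_1^2\right)$. The same factor-of-two slip occurs in the paper's own proof, where $\Delta\varphi\,dV$ is replaced by $\frac{2\pi^n}{(n-1)!}\,dd^c\varphi\wedge\beta^{n-1}$ instead of $\frac{4\pi^n}{(n-1)!}\,dd^c\varphi\wedge\beta^{n-1}$ under the conventions $d^c=\frac{i}{4\pi}(\overline{\partial}-\partial)$, $\beta=dd^c|z|^2$. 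So your proposal reproduces the paper's derivation faithfully, slip included; a correct write-up must either actually exhibit the constants (and then drop the $\frac12$ from the statement, adjusting the factor $2$ in Theorem \ref{th2} accordingly) or explain a convention under which the $|z|^2$ test passes, which the computation above rules out.
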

\begin{proof}
According to Green formula we have
$$\begin{array}{lcl}
  \lambda(\varphi,a,r)-\lambda(\varphi,a,s)&=&\ds\frac{1}{2n}\left[ \ds\int_0^r t\Lambda(\Delta \varphi,a,t)dt-\int_0^s t\Lambda(\Delta \varphi,a,t)dt\right] \\
  &=&\ds\frac{1}{2n} \ds\int_s^r t\Lambda(\Delta \varphi,a,t)dt\\
  &=&\ds\frac{1}{2n}\ds\int_s^r t\frac{n!}{\pi^n t^{2n}}\int_{\mathbb{B}(a,t)}\Delta \varphi(x)dV(x)dt\\
  &=&\ds\frac{1}{2n}\ds\int_s^r t\frac{n!}{t^{2n}}\int_{\mathbb{B}(a,t)}\frac{2}{(n-1)!}dd^c\varphi\wedge\beta^{n-1}dt\\
  &=&\ds\ds\int_s^r\frac{1}{t^{2n-1}}\ds\int_{\mathbb{B}(a,t)}dd^c\varphi\wedge \beta^{n-1}dt\\
  &=&\ds\int_s^r\frac{1}{t^{\frac{2n}{m}-1}}\nu_\varphi(a,t)dt \\
  &=&\ds\frac12\int_s^r\nu_\varphi(a,t)d\phi_m(t).
\end{array}$$
Thus, a changement of variable $t=\phi_m^{-1}(r)$ gives,
$$\lambda(\varphi,a,r_2)-\lambda(\varphi,a,r_1)=\ds\frac12\int_{\phi_m(r_1)}^{\phi_m(r_2)}\nu_\varphi(a,\phi_m ^{-1}(r))dr.$$
\end{proof}

\begin{lem}\label{l3}
Let  $a\in\Omega$. Then
$t\longmapsto\lambda(\varphi,a,r)$  is convex increasing of $t=\phi_m(r)$.
Moreover, the following limit exists in $[0,+\infty[$,
$$\nu_{\varphi}(a)=2\left.\ds\lim_{r\rightarrow0^+}\frac{\lambda(\varphi,r)}{\phi_m(r)}=2\frac{\partial^+\lambda(\varphi,r)}{\partial\phi_m(r)}\right|_{r=0}$$
\end{lem}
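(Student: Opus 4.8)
The plan is to pass to the variable $t=\phi_m(r)$ and to read everything off from the monotonicity statements already available. Since $1\le m<n$, the function $\phi_m$ is a strictly increasing bijection of $]0,d(a,\partial\Omega)[$ onto an interval $]-\infty,t_*[$, with $r\to0^+$ corresponding to $t\to-\infty$; accordingly I set $L(t):=\lambda(\varphi,a,\phi_m^{-1}(t))$. The preceding lemma, in the form established there, says exactly that $L(t_2)-L(t_1)=\tfrac12\int_{t_1}^{t_2}\nu_\varphi(a,\phi_m^{-1}(t))\,dt$, so $L$ is locally absolutely continuous and $L'(t)=\tfrac12\,\nu_\varphi(a,\phi_m^{-1}(t))$ at every point where the monotone integrand is continuous (all but countably many).

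The second step is the convexity and monotonicity of $L$. Positivity of the measure $dd^c\varphi\wedge\beta^{n-1}$ gives $\nu_\varphi(a,\cdot)\ge0$, hence $L'\ge0$ and $L$ is increasing. For convexity I apply Theorem \ref{th1} to $T=dd^c\varphi$: it is $m$-positive, of bidimension $(n-1,n-1)$ with $m+(n-1)\ge n$, and closed, so that $dd^cT\wedge\beta^{p-1}=0$ is trivially a positive measure; the theorem then shows that $r\mapsto\nu_\varphi(a,r)=\nu_{dd^cT}(a,r)$ is increasing. Composing with the increasing map $\phi_m^{-1}$ makes $L'$ increasing, i.e. $L$ convex, which settles the first assertion.

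Finally I would identify the two limits. Because $\nu_\varphi(a,\cdot)$ is increasing and bounded below by $0$, its limit $\nu_\varphi(a)$ as $r\to0^+$ exists in $[0,+\infty[$ by Theorem \ref{th1}; since $\phi_m^{-1}(t)\to0^+$ as $t\to-\infty$, this yields $\lim_{t\to-\infty}L'(t)=\tfrac12\nu_\varphi(a)=:\ell$, which is precisely the one-sided derivative $\partial^+\lambda/\partial\phi_m$ read at $r=0$. It then remains to prove $\lim_{t\to-\infty}L(t)/t=\ell$. Writing $L(t)=L(t_0)+\int_{t_0}^tL'(s)\,ds$ and dividing by $t$, the term $L(t_0)/t$ tends to $0$, while a Cesàro-type averaging of $L'$ (using $L'(s)\to\ell$ together with $1-t_0/t\to1$ as $t\to-\infty$) gives $\tfrac1t\int_{t_0}^tL'(s)\,ds\to\ell$; combining these gives $\nu_\varphi(a)=2\lim_{r\to0^+}\lambda(\varphi,a,r)/\phi_m(r)$.

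I expect the last step to be the only delicate point: transferring the limit of the slope $L'$ at $-\infty$ to the limit of the quotient $L(t)/t$. This is an L'H\^opital/Cesàro–Stolz phenomenon whose sole hypothesis, the existence of $\lim L'$, is guaranteed by the convexity just proved; but one must keep track of the sign of $t<0$ and treat separately the cases $\ell>0$ (where $L(t)\to-\infty$, giving the genuine $\tfrac{\infty}{\infty}$ form) and $\ell=0$ (where $L$ may remain bounded and the quotient still tends to $0$).
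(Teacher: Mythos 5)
Your proposal is correct and follows essentially the same route as the paper: both derive $2\,\partial^+\lambda/\partial\phi_m(r)=\nu_\varphi(a,r)$ from the integral formula of the preceding lemma, get convexity from the monotonicity of the Lelong function (Lelong--Jensen/Theorem \ref{th1}), and then identify $\lim_{r\to0^+}\lambda(\varphi,a,r)/\phi_m(r)$ with the limit of the derivative. The only difference is presentational: the paper asserts this last identification via the chord slope $\bigl(\lambda(\varphi,a,r)-\lambda(\varphi,a,r_0)\bigr)/\bigl(\phi_m(r)-\phi_m(r_0)\bigr)$, while you make the same step explicit through the integral representation and a Ces\`aro averaging argument, which is a valid (indeed more detailed) rendering of the same idea.
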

\begin{proof}
According to the previous lemma
$$\lambda(\varphi,a,r_2)-\lambda(\varphi,a,r_1)=\ds\frac12\int_{\phi_m(r_1)}^{\phi_m(r_2)}\nu_\varphi(a,\phi_m ^{-1}(r))dr.$$
It follows that,
$$2\frac{\partial^+\lambda(\varphi,a,r)}{\partial\phi_m(r)}=\nu_{\varphi}(a,r).$$  Since  the Lelong function $r\longmapsto\nu_\varphi(a,r)$  is increasing on $]0,r_0]$, then  the function $\phi_m(r)\mapsto\lambda(\varphi,a,r)$ is convex  increasing.\\
Furthermore, for $ 0<r_0<d(a,\partial\Omega)$ the following limit exists in $[0,+\infty[$
$$\begin{array}{lcl}
\ds 2\lim_{r\rightarrow0^+}\frac{\lambda(\varphi,a,r)}{\phi_m(r)}&=&
   \ds 2\lim_{r\rightarrow0^+}\frac{\lambda(\varphi,a,r)-\lambda(\varphi,a,r_0)}{\phi_m(r)-\phi_m(r_0)}\\
   &=&\ds 2\left.\frac{\partial^+\lambda(\varphi,a,r)}{\partial\phi_m(r)}\right|_{r=0}\\
&=&\ds\lim_{r\rightarrow0^+}\nu_\varphi(a,r).
\end{array}$$
\end{proof}
Now we can conclude the proof of Theorem \ref{th2}.
\begin{proof}  The first equality in (\ref{q1})  is proved by Lemma \ref{l3}. For the second one, using the classical formula:
$$\Lambda(\varphi,a,r)=2n\int_0^1t^{2n-1}\lambda(\varphi,a,rt)dt$$
and Lemma \ref{l3}, one can deduce that $\phi_m(r)\longmapsto\Lambda(\varphi,a,r)$ is convex increasing so the second limit in (\ref{q1}) exists. Moreover, one has
$$\begin{array}{lcl}
\ds 2\lim_{r\to0^+}\frac{\Lambda(\varphi,a,r)}{\phi(r)}&=&\ds 2n\lim_{r\to0^+}\int_0^1 2\frac{\lambda(\varphi,a,rt)}{\phi(rt)}t^{2n-\frac{2n}{m}+1}dt\\
&=&\ds 2n\nu_\varphi(a) \int_0^1t^{2n-\frac{2n}{m}+1}dt\\
&=&\ds\frac{n}{n-\frac{n}{m}+1}\nu_\varphi(a).
\end{array}$$
\end{proof}
\begin{cor}
Let $\varphi$ be an $m-$sh function on $\Omega$. Then the function $z\longmapsto \nu_\varphi(z)$ is upper semi-continuous on $\Omega$.
\end{cor}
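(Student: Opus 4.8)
The plan is to realize $\nu_\varphi$ as an infimum of a family of functions that are manifestly continuous in $z$, exploiting the elementary fact that an infimum of continuous (indeed, upper semi-continuous) functions is automatically upper semi-continuous. I would start from the second equality in Theorem \ref{th2},
$$\nu_\varphi(z) = C \lim_{r \to 0^+} \frac{\Lambda(\varphi, z, r)}{\phi_m(r)}, \qquad C := \frac{2}{n}\left(n - \frac{n}{m} + 1\right) > 0,$$
rather than the sphere-average version, because the ball average $\Lambda$ is better behaved as a function of $z$.

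The first key point is that for each fixed $r$ the map $z \mapsto \Lambda(\varphi, z, r)$ is continuous on the open set $\{z : d(z,\partial\Omega) > r\}$. Indeed, $\Lambda(\varphi, z, r) = (\varphi * \chi_r)(z)$ with $\chi_r = \frac{n!}{\pi^n r^{2n}}\mathbf{1}_{\mathbb{B}(0,r)}$ bounded and compactly supported and $\varphi \in L^1_{loc}(\Omega)$; continuity of translation in $L^1$ then yields continuity of the convolution. The second key point, borrowed from the proof of Theorem \ref{th2}, is that for fixed $z$ the function $t = \phi_m(r) \mapsto \Lambda(\varphi, z, r)$ is convex and increasing.

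Now fix $z_0 \in \Omega$, choose $r_0 < d(z_0, \partial\Omega)$, and work in the neighborhood $U = \{z : d(z,\partial\Omega) > r_0\}$ of $z_0$. Writing $h(t) = \Lambda(\varphi, z, r)$ with $t = \phi_m(r)$ and $t_0 = \phi_m(r_0)$, convexity implies that the difference quotient $t \mapsto (h(t)-h(t_0))/(t-t_0)$ is non-decreasing; hence as $r \to 0^+$, i.e. $t \to -\infty$, it decreases to its infimum, which moreover coincides with $\lim_{t\to-\infty} h(t)/t$. Translating back gives, for $z \in U$,
$$\nu_\varphi(z) = C \inf_{0 < r < r_0} \frac{\Lambda(\varphi, z, r) - \Lambda(\varphi, z, r_0)}{\phi_m(r) - \phi_m(r_0)},$$
where $\phi_m(r) - \phi_m(r_0) < 0$ is a nonzero constant independent of $z$.

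For each fixed $r \in (0,r_0)$ the term $z \mapsto C\,\dfrac{\Lambda(\varphi,z,r) - \Lambda(\varphi,z,r_0)}{\phi_m(r) - \phi_m(r_0)}$ is continuous on $U$ by the first key point and $C>0$. Thus $\nu_\varphi$ is, on $U$, an infimum of continuous functions, hence upper semi-continuous on $U$ and in particular at $z_0$; since $z_0$ was arbitrary and upper semi-continuity is local, $\nu_\varphi$ is upper semi-continuous on $\Omega$. I expect the main point requiring care to be the convexity-to-infimum step, namely the monotonicity of the difference quotients and the identification of the asymptotic slope with $\lim_{t\to-\infty} h(t)/t$; the continuity of $\Lambda(\varphi,\cdot,r)$ is routine.
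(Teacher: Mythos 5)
Your proof is correct, and it takes a genuinely different route from the paper. The paper argues directly on sub-level sets: it fixes $c>\nu_\varphi(z)$, normalizes $\varphi<0$ near $z$, and uses the geometric inclusion $\mathbb B(\xi,r(1-t))\subset\mathbb B(z,r)$ for $\xi$ close to $z$ together with the negativity of $\varphi$ to compare $\Lambda(\varphi,\xi,r(1-t))$ with $(1-t)^{-2n}\Lambda(\varphi,z,r)$, then exploits the fact that $\frac2n\bigl(n+1-\frac nm\bigr)\Lambda(\varphi,z,r)/\phi_m(r)$ \emph{decreases} to $\nu_\varphi(z)$ to absorb the $(1-t)^{-2(n+1-\frac nm)}$ loss --- a Hartogs-type argument with explicit $\varepsilon$-management via the auxiliary constants $c'$ and $t$. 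You instead convert the convexity of $\phi_m(r)\mapsto\Lambda(\varphi,z,r)$ (established in the paper's proof of Theorem \ref{th2}) into the representation $\nu_\varphi(z)=C\inf_{0<r<r_0}\frac{\Lambda(\varphi,z,r)-\Lambda(\varphi,z,r_0)}{\phi_m(r)-\phi_m(r_0)}$ on $\{z:\ d(z,\partial\Omega)>r_0\}$, and invoke the fact that an infimum of continuous functions is upper semi-continuous; your identification of the asymptotic slope $\lim_{t\to-\infty}h(t)/t$ with the infimum of difference quotients is the standard convexity computation and is sound, as is the continuity of $z\mapsto\Lambda(\varphi,z,r)$ as a convolution of $\varphi\in L^1_{loc}$ with a bounded compactly supported kernel. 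What your approach buys: it dispenses entirely with the local normalization $\varphi<0$ and the quantitative dilation bookkeeping, working with difference quotients rather than the raw ratio $\Lambda/\phi_m$ (whose monotone decrease the paper uses but does not really justify beyond the convexity it rests on); what the paper's approach buys is a proof that goes through knowing only the monotone convergence of the normalized means, without needing the infimum representation, and it is the same scaling computation that is reused later in the integrability-exponent section.
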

\begin{proof}
For any $c\in\Bbb R$, we set $\Omega_c:=\{z\in\Omega;\ \nu_\varphi(z)<c\}$. To prove that $\Omega_c$ is open we claim that if $c\leq 0$ then $\Omega_c=\emptyset$. So let $c>0$ and $z\in \Omega_c$. Without loss of generality, we can assume that $\varphi<0$ on $\mathbb B(z,r_0)$ for some $0<r_0<d(z,\partial \Omega)$. Let $c'\in]\nu_\varphi(z),c[$ and $t\in]0,1[$ such that $$\frac{c'}{(1-t)^{2(n+1-\frac{n}{m})}}<c.$$
As $$\frac2n (n+1-\frac nm)\frac{\Lambda(\varphi,z,r)}{\phi(r)}$$ decreases to $\nu_\varphi(z)$, then there exists $0<r_1<r_0$ such that for every $0<r<r_1$ one has $$\frac2n (n+1-\frac nm)\frac{\Lambda(\varphi,z,r)}{\phi(r)}\leq c'.$$
Let $0<r<r_1$. Then for any $\xi\in\mathbb B(z,rt)$ one has $\mathbb B(\xi,r(1-t))\subset \mathbb B(z,r)$. Hence we obtain
$$\Lambda(\varphi,\xi,r(1-t))\geq \frac1{(1-t)^{2n}}\Lambda(\varphi,z,r)$$
Thus,
$$\begin{array}{lcl}
     \ds\frac2n (n+1-\frac nm)\frac{\Lambda(\varphi,\xi,r(1-t))}{\phi(r(1-t))}&\leq&\ds \frac{\frac2n (n+1-\frac nm)}{(1-t)^{2(n-\frac{n}{m}+1)}} \frac{\Lambda(\varphi,z,r)}{\phi(r)}\\
&\leq&\ds \frac{c'}{(1-t)^{2(n-\frac{n}{m}+1)}}.
  \end{array}
$$
We conclude that we have
$$\nu_\varphi(\xi)\leq \frac{c'}{(1-t)^{2(n-\frac{n}{m}+1)}}<c.$$
So $\xi\in \Omega_c$ for every $\xi\in\mathbb B(z,rt)$.

\end{proof}
 Since $z\mapsto\nu_{\varphi}(z)$ is upper semi continuous on $\Omega$ then it is clair that the level sets $E^m_\varphi(c):=\{z\in\Omega;\nu_{\varphi}(z)\geq c\}$ is closed.
Moreover, for any $c>0$ we have $E^m_\varphi(c)\subset\{\varphi=-\infty\}$ and its Hausdorff dimension $$\dim_{\mathcal H}(E^m_\varphi(c))\leq \frac{2n}{m}(m-1).$$
In particular, For $m=1$ we have $E^1_\varphi(c)$ is a locally finite set.\\

Indeed, let $z\in\Omega$ such that $-\infty<\varphi(z)< 0$. Then for any $0<r<d(z,\partial\Omega)$ one has $$\frac{\varphi(z)}{\phi_m(r)}\geq\frac{\lambda(\varphi,z,r)}{\phi_m(r)}\geq0.$$
Hence, when we tend $r\to 0^+$ we get $\nu_{\varphi}(z)=0$. It follows that  $z\not\in E^m_\varphi(c)$ for all $c>0$.\\
It is well known  that for a plurisubharmonic function $u$, the level set $E_{u}(c)$ is analytic  whenever $c>0$ (this result is due to Siu \cite{Si}). Thus, we can ask the following question: \\

 \begin{prob}  Let $\varphi$ be an $m-$sh function on $\Omega$. What can be said about the analyticity of level sets $E^m_\varphi(c)$ for any $c>0$?
 \end{prob}

For the maximum of $m-$sh functions on spheres/or balls we have the following proposition:
\begin{prop}\label{p3}
Let $\varphi$ be an $m-$sh function on $\Omega$ and $a$ be a point of $\Omega$. Then
$\phi_m(r)\longmapsto M(\varphi,a,r)$ is a convex increasing function on $]0,d(a,\partial\Omega)[$ where $$M(\varphi,a,r):=\sup_{\xi\in\mathbb B(a,r)}\varphi(\xi).$$
In particular, the limit $$\lim_{r\to0^+}\frac{M(\varphi,a,r)}{\phi(r)}$$ exists.
\end{prop}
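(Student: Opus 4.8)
The plan is to run the $m$-subharmonic analogue of the Hadamard three-spheres theorem, using the radial kernel $\widetilde\phi_m$ in place of $\log|z|$. First I would record two preliminary facts. Since $\varphi$ is in particular subharmonic, the maximum principle gives $M(\varphi,a,r)=\sup_{\mathbb{B}(a,r)}\varphi=\max_{\mathbb{S}(a,r)}\varphi$, and this quantity is visibly non-decreasing in $r$; as $\phi_m$ is increasing on $]0,\infty[$, this already yields that $\phi_m(r)\longmapsto M(\varphi,a,r)$ is increasing. Second, $\widetilde\phi_m$ is $m$-sh and is $m$-harmonic away from the origin: from the identity $(dd^c\widetilde\phi_m)^m\wedge\beta^{n-m}=\delta_0$ recorded before Theorem \ref{th3} we read off $(dd^c\widetilde\phi_m)^m\wedge\beta^{n-m}=0$ on $\mathbb{C}^n\smallsetminus\{0\}$.

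For convexity, fix $0<r_1<r_2<d(a,\partial\Omega)$ and choose constants $A\in\mathbb{R}$ and $B:=\dfrac{M(\varphi,a,r_2)-M(\varphi,a,r_1)}{\phi_m(r_2)-\phi_m(r_1)}\geq 0$ so that the radial function $h(z):=A+B\,\widetilde\phi_m(z-a)$ satisfies $h\equiv M(\varphi,a,r_i)$ on $\mathbb{S}(a,r_i)$ for $i=1,2$. Because $B\geq0$ and $\mathcal{SH}_m$ is a convex cone containing the constants, $h$ is $m$-sh on the annulus $\mathbb{B}(a,r_1,r_2)$; moreover $(dd^ch)^m\wedge\beta^{n-m}=B^m(dd^c\widetilde\phi_m)^m\wedge\beta^{n-m}=0$ there, so $h$ is $m$-harmonic. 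On the two bounding spheres one has $\varphi\leq M(\varphi,a,r_i)=h$.

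The decisive step is to propagate this boundary inequality inward, i.e. to deduce $\varphi\leq h$ on the whole closed annulus. This is precisely the comparison principle for the complex Hessian operator: a bounded $m$-sh function is dominated on a bounded domain by any $m$-harmonic (hence maximal) $m$-sh function with larger boundary values. To apply it despite $\varphi$ being only upper semicontinuous and possibly $-\infty$, I would replace $\varphi$ by the truncation $\max(\varphi,-k)$, which is bounded and $m$-sh, still lies below $h$ on $\partial\mathbb{B}(a,r_1,r_2)$ once $-k<M(\varphi,a,r_1)$, and satisfies $(dd^c\max(\varphi,-k))^m\wedge\beta^{n-m}\geq 0=(dd^ch)^m\wedge\beta^{n-m}$; the comparison principle then gives $\max(\varphi,-k)\leq h$, and $k\to+\infty$ yields $\varphi\leq h$ on the annulus (alternatively one regularizes by $\varphi*\rho_\varepsilon\downarrow\varphi$ and passes to the limit). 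Once $\varphi\leq h$ is known, for any $r_1<r<r_2$ we get $M(\varphi,a,r)=\max_{\mathbb{S}(a,r)}\varphi\leq\max_{\mathbb{S}(a,r)}h=A+B\phi_m(r)$, which says exactly that $(\phi_m(r),M(\varphi,a,r))$ lies below the chord through the endpoints, i.e. convexity of $\phi_m(r)\longmapsto M(\varphi,a,r)$.

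I expect this comparison/maximality step to be the only genuine obstacle, the rest being bookkeeping; note that it is the $m$-harmonicity of $\widetilde\phi_m$ (not mere subharmonicity) that forces the correct variable $\phi_m(r)$. Finally, the existence of $\lim_{r\to0^+}M(\varphi,a,r)/\phi_m(r)$ follows from convexity exactly as in Lemma \ref{l3}: writing $s=\phi_m(r)\to-\infty$ as $r\to0^+$ and $G(s)=M(\varphi,a,r)$, convexity makes the difference quotients $\frac{G(s)-G(s_0)}{s-s_0}$ monotone in $s$, hence convergent as $s\to-\infty$; since $\frac{G(s)}{s}=\frac{G(s)-G(s_0)}{s-s_0}\cdot\frac{s-s_0}{s}+\frac{G(s_0)}{s}$ with the last two factors tending to $1$ and $0$, the limit exists.
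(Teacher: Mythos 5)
Your proof is correct and takes essentially the same route as the paper: the paper applies the comparison principle on the annulus $\mathbb{B}(a,r_1,r_2)$ to the normalized functions $u=\bigl(\varphi-M(\varphi,0,r_1)\bigr)/\bigl(M(\varphi,0,r_2)-M(\varphi,0,r_1)\bigr)$ and $v=\bigl(\phi_m(|z|)-\phi_m(r_1)\bigr)/\bigl(\phi_m(r_2)-\phi_m(r_1)\bigr)$, which is exactly your comparison of $\varphi$ with the $m$-harmonic radial interpolation $h=A+B\,\widetilde\phi_m$, followed by the same three-spheres/monotone-difference-quotient argument for convexity and the limit. Your truncation $\max(\varphi,-k)$ to justify the comparison principle for unbounded $\varphi$ is a detail the paper leaves implicit (and your formulation also sidesteps the paper's division by $M(\varphi,0,r_2)-M(\varphi,0,r_1)$, which degenerates when $M$ is locally constant), but these are refinements of the same argument, not a different one.
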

\begin{proof}
Without loss of generality we can assume that $a=0\in\Omega$.\\
    Let $0<r_1<r_2<d(0,\partial\Omega)$. We consider the two following $m-$sh functions on $\Omega$:  $$u(z)=\ds\frac{\varphi(z)-M(\varphi,0,r_1)}{M(\varphi,0,r_2)-M(\varphi,0,r_1)}\quad \hbox{ and }\quad v(z)=\ds\frac{\phi_m(|z|)-\phi_m(r_1)}{\phi_m(r_2)-\phi_m(r_1)}.$$ Therefore
 \begin{itemize}
   \item for every $z\in \mathbb{B}(r_2)$ one has $u(z)\leq 1$
   \item and for every $z\in \mathbb{B}(r_1)$ one has $u(z)\leq 0$.
 \end{itemize}
 Hence,
 $$\left\{\begin{array}{lcl}
          u(z)\leq 1=v(z) &\hbox{if}\  |z|=r_2 \\
          u(z)\leq 0=v(z)& \hbox{if}\   |z|=r_1
         \end{array}\right.$$
which gives  $u(z)\leq v(z)$ for every  $z \in \partial(\mathbb{B}(r_2)\smallsetminus\mathbb{B}(r_1))$.\\ As $v$ is an $m-$sh function
and  $dd^cu^m\wedge\beta^{n-m}\geq dd^cv^m\wedge\beta^{n-m}=0$, then  thanks to the comparison principle, we have $u\leq v$ on $ \mathbb{B}(r_2)\smallsetminus\mathbb{B}(r_1)$.
Thus, for every  $z\in \mathbb{B}(r_2)\smallsetminus\mathbb{B}(r_1)$,
$$\ds\frac{\varphi(z)-M(\varphi,0,r_1)}{M(\varphi,0,r_2)-M(\varphi,0,r_1)}\leq\frac{\phi_m(|z|)-\phi_m(r_1)}{\phi_m(r_2)-\phi_m(r_1)}$$
Then, for any $r_1<r<r_2$ we have
$$\frac{M(\varphi,0,r)-M(\varphi,0,r_1)}{\phi_m(r)-\phi_m(r_1)}\leq \frac{M(\varphi,0,r_2)-M(\varphi,0,r_1)}{\phi_m(r_2)-\phi_m(r_1)}.$$
It follows that the function  $$r\longmapsto\ds\frac{M(\varphi,0,r)-M(\varphi,0,r_1)}{\phi_m(r)-\phi_m(r_1)}$$ is increasing on $]r_1,d(0,\partial\Omega))[$.
So we conclude the existence of the limit  $$\ell_{\varphi}(0):=2\ds\lim_{r\to0^+}\frac{M(\varphi,0,r)}{\phi_m(r)}.$$
\end{proof}
Claim that we have $\ell_\varphi(a)\leq \nu_\varphi(a)$ for any $a\in\Omega$ and we have equality in some particular cases of $m-$sh functions $\varphi$ on $\Omega$. Hence we can pose the following question:\\

\begin{prob}Is it true that for any $m-$sh function $\varphi$ on $\Omega$ and any $a\in\Omega$, we have
$$ \nu_\varphi(a)=2\lim_{r\to0^+}\frac{M(\varphi,a,r)}{\phi_m(r)}?$$
\end{prob}

In the following, we give an estimate to  Lelong number by the mass of $m-$sh function.
\begin{rem}
Let $a$ be a point of $\Omega$ and $\varphi$ be a function in $\mathcal{E}_m(\Omega)$. Then
$$\nu_{ \varphi}(a)\leq ((dd^c\varphi)^m\wedge\beta^{n-m}(\{a\}))^{\frac{1}{m}}$$
\end{rem}
\begin{proof}
Without loss of generality we can assume that $a=0$ and $\varphi$ belongs to $\mathcal{F}_m(\mathbb{B})$ where $\mathbb{B}=\mathbb{B}(r_0)$ is a ball. We have
$$\ds\lim_{s\rightarrow0}\int_{B(0,s)}dd^c\varphi\wedge(dd^c\widetilde{\phi}_m)^{m-1}\wedge\beta^{n-m}=\nu_{\varphi}(0).$$
Using Lemma \ref{lem1}, it follows that for $\varrho\geq1$,
$$\begin{array}{lcl}
   \nu_\varphi(0)&\leq&\ds\int_{\mathbb{B}}-\max\left(\frac{\widetilde{\phi}_m}{\varrho},-1\right)dd^c\varphi\wedge(dd^c\widetilde{\phi}_m)^{m-1} \wedge\beta^{n-m}  \\
   &  \leq&\left[\ds\int_{\mathbb{B}}-\max\left(\frac{\widetilde{\phi}_m}{\varrho},-1\right)dd^c\varphi^m\wedge\beta^{n-m}\right]^{\frac{1}{m}}\times\\
   & & \left[\ds\int_{\mathbb{B}}-\max\left(\frac{\widetilde{\phi}_m}{\varrho},-1\right)(dd^c\widetilde{\phi}_m)^m\wedge\beta^{n-m}\right]^{\frac{m-1}{m}}.
   \end{array}$$
   Since $\widetilde{\phi}_m$ is the elementary solution of the complex Hessian equation we infer,
   $$\nu_{\varphi}(0)\leq\ds\left[\int_{\mathbb{B}}-\max\left(\frac{\widetilde{\phi}_m}{\varrho},-1\right) (dd^c\varphi)^m\wedge\beta^{n-m}\right]^{\frac{1}{m}}$$

Consequently, when $\varrho$ goes to $+\infty$, we obtain
$$\nu_\varphi(0)\leq\ds(dd^c\varphi)^m\wedge\beta^{n-m}(\{0\})^{\frac{1}{m}}.$$
\end{proof}

\section{Integrability exponents of $m-$subharmonic functions}
This part is an application of previous parts where we study the integrability exponents of $m-$sh functions. This problem was posed by Blocki \cite{Bl} in 2005. We express this exponent   in terms of volume of sub-level sets of the function, then we find a relationship between it and the Lelong number of the function. In particular we determine this exponent of a $1-$sh function when its Lelong number is not equal to zero.
\begin{defn}
Let $\varphi$ be an $m-$sh function on $\Omega$ and $K$ be a compact subset of $\Omega$. The integrability exponent $\imath_K(\varphi)$ of $\varphi$ at $K$ is defined as
$$\imath_{K}(\varphi)=\sup\left\{c>0, |\varphi|^c\in L^1(\vartheta(K))\right\}.$$
For simplicity, if $K=\{x\}$ then we denote $\imath_{\{x\}}(\varphi)$ by $\imath_x(\varphi)$.
\end{defn}
 \begin{prop}
Let $\varphi$ be an $m-$sh function on $\Omega$. Then for every compact subset $K$ of $\Omega$ we have
$$\imath_K(\varphi)=\ds\inf_{x\in K}\imath_x(\varphi).$$
\end{prop}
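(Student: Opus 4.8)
The plan is to prove the two inequalities $\imath_K(\varphi)\le\inf_{x\in K}\imath_x(\varphi)$ and $\imath_K(\varphi)\ge\inf_{x\in K}\imath_x(\varphi)$ separately, the first being elementary and the second carrying the real content.

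For the easy inequality I would use monotonicity of the integrability condition under restriction of the domain. If $|\varphi|^c\in L^1(\vartheta(K))$ for some neighborhood $\vartheta(K)$ of $K$, then for each $x\in K$ this same set is a neighborhood of $x$, so $|\varphi|^c\in L^1(\vartheta(x))$ and hence $c\le\imath_x(\varphi)$. Taking the supremum over the admissible $c$ on the left and then the infimum over $x\in K$ on the right yields $\imath_K(\varphi)\le\inf_{x\in K}\imath_x(\varphi)$.

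The substance lies in the reverse inequality. Fix any $c$ with $0<c<\inf_{x\in K}\imath_x(\varphi)$, so that $c<\imath_x(\varphi)$ for every $x\in K$. By the definition of $\imath_x(\varphi)$ as a supremum, for each such $x$ there exist $c'>c$ and a bounded open neighborhood $V_x$ of $x$ with $|\varphi|^{c'}\in L^1(V_x)$. Here I would invoke the key comparison: since $\varphi$ is $m$-sh it is upper semicontinuous and locally bounded above, so on $V_x$ the integrand $|\varphi|^c$ can only fail to be integrable on the set $\{\varphi<-1\}$, where $|\varphi|^c\le|\varphi|^{c'}$, while on $V_x\cap\{\varphi\ge-1\}$ it is bounded. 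Splitting the integral accordingly gives $\int_{V_x}|\varphi|^c\,dV\le |V_x|+\int_{V_x}|\varphi|^{c'}\,dV<+\infty$, hence $|\varphi|^c\in L^1(V_x)$.

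Finally I would use compactness of $K$. The family $\{V_x\}_{x\in K}$ is an open cover of $K$, so there is a finite subcover $V_{x_1},\dots,V_{x_N}$ whose union $V$ is a neighborhood of $K$, and $\int_V|\varphi|^c\,dV\le\sum_{j=1}^N\int_{V_{x_j}}|\varphi|^c\,dV<+\infty$. Thus $|\varphi|^c\in L^1(V)$ and $c\le\imath_K(\varphi)$; letting $c\uparrow\inf_{x\in K}\imath_x(\varphi)$ gives $\inf_{x\in K}\imath_x(\varphi)\le\imath_K(\varphi)$, and combining the two inequalities completes the proof. The only delicate point is the comparison step in the third paragraph, which is where the local upper boundedness of $m$-sh functions is genuinely used to pass from integrability of a higher power $|\varphi|^{c'}$ to that of $|\varphi|^c$; the remainder is a standard covering argument.
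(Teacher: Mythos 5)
Your proof is correct and follows essentially the same route as the paper: the trivial inclusion of exponent sets gives $\imath_K(\varphi)\le\inf_{x\in K}\imath_x(\varphi)$, and a compactness/finite-subcover argument gives the reverse. The only difference is that you explicitly justify the step the paper leaves implicit (that $c<\imath_x(\varphi)$ forces $|\varphi|^c\in L^1$ near $x$); note this follows from the pointwise bound $|\varphi|^c\le 1+|\varphi|^{c'}$ on a bounded neighborhood alone, so your appeal to local upper boundedness of $m$-sh functions, while harmless, is not actually needed.
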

\begin{proof}
For $x\in K$, we have$$\{c>0,|\varphi|^c\in L^1(\vartheta(K))\}\subset\{c>0,|\varphi|^c\in L^1(\vartheta(x))\}.$$
Therefore, $\imath_K(\varphi)\leq\ds \imath_{x}(\varphi)$. It follows that  $$\imath_K(\varphi)\leq \ds\inf_{x\in K}\imath_x(\varphi).$$
Conversely, let $a<\inf_{x\in K} \imath_x(\varphi)$. For any $x\in K$, let $U_x$ be a neighborhood of $x$ such that $|\varphi|^a\in L^1(U_x)$.
As $K$ is compact, then there are $x_1,...,x_p\in K$ such that $K\ds\subset\ds\cup_{j=1}^{p}U_{x_j}=U$. According to Borel-Lebesgue Lemma, we have $|\varphi|^a\in L^1(U)$. Hence $a\leqslant \imath_K(\varphi)$ so we conclude that we have $\inf_{x\in K}\imath_x(\varphi)\leqslant \imath_K(\varphi)$ .
\end{proof}
Some quite questions related  to the integrability exponents of $m-$sh function are still open, among this we can state the following:
 \begin{prob} Let $\varphi$ be an $m-$sh function on $\Omega$.
 \begin{enumerate}
   \item Are the maps  $a\longmapsto \imath_a(\varphi)$ and $\varphi\longmapsto \imath_a(\varphi)$ lower semi-continuous respectively  on $\Omega$ and on the set of locally integrable functions?
   \item Let $\mathcal I_\varphi:=\{c>0;\ |\varphi|^{c}\in L^1(\vartheta(z))\}$.  Is $\mathcal I_\varphi$ an  open set? (openness conjecture).
 \end{enumerate}

 \end{prob}
 If the map $a\longmapsto \imath_a(\varphi)$ is lower semi continuous on $\Omega$ then for every compact subset $K$ of $\Omega$ there exists $a\in K$ such that $\imath_K(\varphi)=\imath_a(\varphi)$.
\begin{lem}
Let $\varphi$, $\psi$ be two $m-$sh functions on $\Omega$ and $K$ be a compact subset of $\Omega$. If $\varphi\leq\psi$ in a neighborhood of $K$ then
 $$\imath_K(\varphi)\leq \imath_K(\psi).$$

\end{lem}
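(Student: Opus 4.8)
The plan is to reduce the inequality of exponents to an inclusion of the two index sets whose suprema define them, namely
$$\{c>0;\ |\varphi|^c\in L^1(\vartheta(K))\}\subseteq\{c>0;\ |\psi|^c\in L^1(\vartheta(K))\}.$$
Once this inclusion is established, taking the supremum of both sides immediately yields $\imath_K(\varphi)\leq\imath_K(\psi)$. Thus the whole matter reduces to the following statement: if $c>0$ is such that $|\varphi|^c$ is integrable on some neighborhood of $K$, then $|\psi|^c$ is integrable on a (possibly smaller) neighborhood of $K$ as well.

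To prove this, I would first fix a neighborhood $U_0$ of $K$ on which $\varphi\leq\psi$ (given by hypothesis) and a neighborhood $W$ of $K$ with $|\varphi|^c\in L^1(W)$. Since $\psi$ is $m$-sh it is in particular subharmonic, hence upper semi-continuous and therefore bounded above on any compact neighborhood of $K$; so I can choose a neighborhood $U$ of $K$ with $\overline U\subset U_0\cap W$ relatively compact and a constant $M$ with $\psi\leq M$ on $U$. On this $U$ I then have simultaneously $\varphi\leq\psi\leq M$ and $|\varphi|^c\in L^1(U)$.

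The key step is a pointwise comparison obtained by splitting $U$ into the two regions $\{\psi\leq -1\}$ and $\{\psi>-1\}$. On $\{\psi>-1\}$ one has $-1<\psi\leq M$, so $|\psi|\leq\max(1,|M|)$ is bounded and $|\psi|^c$ is integrable over the bounded set $U$. On $\{\psi\leq -1\}$ both functions are negative and $\varphi\leq\psi<0$, whence $|\psi|=-\psi\leq-\varphi=|\varphi|$ and consequently $|\psi|^c\leq|\varphi|^c$; integrability of $|\varphi|^c$ on $U$ then forces integrability of $|\psi|^c$ on this region. Adding the two contributions gives $|\psi|^c\in L^1(U)$, which is exactly the claimed inclusion, and the lemma follows.

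I do not expect a genuine obstacle here: the argument is essentially a sign/monotonicity comparison, and the only points requiring care are the correct reading of the germ notation $\vartheta(K)$ and the use of the boundedness from above of subharmonic functions to control $|\psi|^c$ away from the polar set $\{\psi=-\infty\}$, where the singularity of $\psi$ is dominated by that of $\varphi$.
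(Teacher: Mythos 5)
Your proof is correct and follows essentially the same route as the paper: both reduce the inequality to the inclusion $\{c>0;\ |\varphi|^c\in L^1(\vartheta(K))\}\subset\{c>0;\ |\psi|^c\in L^1(\vartheta(K))\}$ via the pointwise comparison $|\psi|\leq|\varphi|$ where both functions are negative. The only difference is cosmetic: the paper disposes of the region where $\psi$ is not negative by a terse ``WLOG $\varphi\leq\psi\leq0$'', whereas you handle it explicitly by splitting into $\{\psi\leq-1\}$ and $\{\psi>-1\}$ and using the upper bound of the usc function $\psi$ on a compact neighborhood --- a careful fleshing-out of the same idea rather than a new argument.
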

\begin{proof}
Without loss of generality we can assume that $\varphi\leq\psi\leq0$ on $\vartheta(K)$. It follows that,
$$\left\{c>0, |\varphi|^c\in L^1(\vartheta(K))\right\}\subset\left\{c>0, |\psi|^c\in L^1(\vartheta(K))\right\}$$ and the result holds.
\end{proof}
\begin{lem}\label{lemme}
Let $K$ be a compact subset of $\Omega$ and $\varphi$ be an $m-$sh function on $\Omega$, negative on a neighborhood of $K$. For any $t\in \mathbb{R}$ we set $A_{\varphi}(t)=\{z\in\Omega;\varphi(z)\leq t\}$. Then for every positive number $0<\alpha<\imath_K(\varphi)$ there exists $C_{\alpha}>0$ such that for any $t<0$ we have $$V(K\cap A_{\varphi}(t))\leq\frac{C_{\alpha}}{|t|^{\alpha}}.$$
\end{lem}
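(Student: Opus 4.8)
The plan is to reduce the statement to Markov's (Chebyshev's) inequality. First I would fix a bounded neighborhood $\vartheta(K)$ of $K$ on which $\varphi<0$, as granted by the hypothesis; being relatively compact it has finite volume. Since
$$\alpha<\imath_K(\varphi)=\sup\{c>0;\ |\varphi|^c\in L^1(\vartheta(K))\},$$
I can choose an exponent $c$ with $\alpha<c<\imath_K(\varphi)$ for which $|\varphi|^c\in L^1(\vartheta(K))$. Here one uses that the set of admissible exponents is an interval: if $|\varphi|^{c'}\in L^1(\vartheta(K))$ for some $c'>c$, then splitting $\vartheta(K)$ into $\{|\varphi|\geq1\}$ and $\{|\varphi|<1\}$ and using $|\varphi|^c\leq|\varphi|^{c'}$ on the former while $|\varphi|^c\leq1$ on the latter (recall $V(\vartheta(K))<+\infty$) shows $|\varphi|^c\in L^1(\vartheta(K))$. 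Hence every exponent below $\imath_K(\varphi)$ is admissible, and such a $c$ exists.

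Next I would estimate the mass of $|\varphi|^c$ on the sublevel set. For $t<0$ and $z\in K\cap A_\varphi(t)$ one has $\varphi(z)\leq t<0$, hence $|\varphi(z)|\geq|t|$ and therefore $|\varphi(z)|^c\geq|t|^c$. Since $K\cap A_\varphi(t)\subset\vartheta(K)$, this yields
$$|t|^c\,V(K\cap A_\varphi(t))\leq\int_{K\cap A_\varphi(t)}|\varphi|^c\,dV\leq\int_{\vartheta(K)}|\varphi|^c\,dV=:C<+\infty,$$
so that $V(K\cap A_\varphi(t))\leq C/|t|^c$ for every $t<0$.

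Finally I would convert the exponent $c$ into the desired $\alpha<c$ by a case distinction on the size of $|t|$. When $|t|\geq1$ one has $|t|^c\geq|t|^\alpha$, so $V(K\cap A_\varphi(t))\leq C/|t|^c\leq C/|t|^\alpha$. When $0<|t|<1$ one has $1/|t|^\alpha>1$, and since $K$ is compact its volume $V(K)$ is finite, whence $V(K\cap A_\varphi(t))\leq V(K)\leq V(K)/|t|^\alpha$. Taking $C_\alpha=\max(C,V(K))$ then gives $V(K\cap A_\varphi(t))\leq C_\alpha/|t|^\alpha$ for all $t<0$, as claimed.

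I do not expect a serious obstacle: the heart of the argument is a single application of Markov's inequality. The only points that require a little care are the preliminary selection of an exponent $c$ strictly between $\alpha$ and $\imath_K(\varphi)$ (justified by the interval structure of the admissible exponents, together with the finiteness of $V(\vartheta(K))$), and the handling of the range where $|t|$ is close to $0$: there the power bound $C/|t|^c$ degenerates relative to $C_\alpha/|t|^\alpha$, and one must instead invoke the trivial bound $V(K)$ coming from the compactness of $K$.
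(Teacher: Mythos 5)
Your proof is correct and follows essentially the same route as the paper's: one application of Markov's (Chebyshev's) inequality to $|\varphi|^{\alpha}$ on the set $K\cap A_{\varphi}(t)$, where $\frac{\varphi(z)}{t}\geq 1$. The only difference is cosmetic: the paper applies Markov directly with the exponent $\alpha$ itself, taking $C_{\alpha}=\int_K|\varphi|^{\alpha}dV$ (whose finiteness rests on exactly the interval-structure argument you make explicit), so your detour through an intermediate exponent $c>\alpha$ and the case split at $|t|=1$ are harmless but unnecessary.
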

\begin{proof}
Let $0<\alpha<\imath_K(\varphi)$ and $t<0$. If $z\in A_{\varphi}(t)\cap K$ then $\frac{\varphi(z)}{t}\geq 1$.
It follows that,
$$\begin{array}{lcl}
V(A_{\varphi}(t)\cap K)&\leq&\ds\int_{A_{\varphi}(t)\cap K}\left|\frac{\varphi(z)}{t}\right|^{\alpha}dV(z)\\
                       &\leq&\ds\frac{1}{|t|^{\alpha}}\int_K|\varphi(z)|^{\alpha}dV(z)
\end{array}$$
as $\alpha<\imath_K(\varphi)$ then $$C_{\alpha}:=\int_K|\varphi(z)|^{\alpha}dV(z)<+\infty.$$
\end{proof}
\begin{theo}
Let $K$ be a compact subset of $\Omega$ and $\varphi$ be an $m-$sh function on $\Omega$, negative on a neighborhood of $K$. Then
$$\imath_K(\varphi)=\ds\sup\left\{\alpha>0; \exists\ C_{\alpha}>0, \forall\; t<0 \ V(A_{\varphi}(t)\cap K )\leq\frac{C_{\alpha}}{|t|^{\alpha}}\right\}$$
\end{theo}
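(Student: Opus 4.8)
The plan is to establish the two inequalities separately. Write $\beta_K(\varphi)$ for the supremum appearing on the right-hand side. The inequality $\imath_K(\varphi)\le\beta_K(\varphi)$ is already contained in Lemma \ref{lemme}: for every $0<\alpha<\imath_K(\varphi)$ that lemma produces a constant $C_\alpha>0$ with $V(A_\varphi(t)\cap K)\le C_\alpha/|t|^\alpha$ for all $t<0$, so each such $\alpha$ lies in the set defining $\beta_K(\varphi)$. Taking the supremum over $\alpha<\imath_K(\varphi)$ then gives $\imath_K(\varphi)\le\beta_K(\varphi)$, and no further work is needed for this direction.

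For the reverse inequality I would show that any $\alpha$ admitting a volume bound forces $|\varphi|^{c}$ to be integrable for every $c<\alpha$. After normalizing so that $\varphi\le 0$ near $K$, set $f=-\varphi\ge 0$ and apply the Cavalieri (layer-cake) identity $\ds\int_K|\varphi|^{c}\,dV=c\int_0^{\infty}s^{c-1}\,V(\{z\in K:\ f(z)>s\})\,ds$. Since $\{f>s\}\cap K=\{\varphi<-s\}\cap K\subset A_\varphi(-s)\cap K$ and $-s<0$ whenever $s>0$, the hypothesis yields $V(\{f>s\}\cap K)\le C_\alpha\,s^{-\alpha}$ for all $s>0$.

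The decisive step is to split the $s$-integral at the threshold $s=1$. On $(0,1]$ I bound the distribution function crudely by $V(K)<+\infty$ (this is exactly where compactness of $K$ is used), and $\ds\int_0^1 s^{c-1}\,ds=1/c<+\infty$ for every $c>0$. On $[1,\infty)$ I insert the decay estimate, reducing the tail to $C_\alpha\ds\int_1^{\infty}s^{c-1-\alpha}\,ds$, which converges precisely when $c-1-\alpha<-1$, that is, $c<\alpha$. Hence $\ds\int_K|\varphi|^{c}\,dV<+\infty$ for every $0<c<\alpha$, so $c\le\imath_K(\varphi)$; letting $c\to\alpha^-$ gives $\alpha\le\imath_K(\varphi)$, and taking the supremum over all admissible $\alpha$ produces $\beta_K(\varphi)\le\imath_K(\varphi)$. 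Combining the two inequalities gives the claimed equality.

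I expect the substance of the argument to be bookkeeping rather than any delicate estimate: one only has to match the exponent condition $c<\alpha$ to the convergence threshold of the tail integral, and to control the small-$s$ part by the finite total volume $V(K)$. The one point that genuinely requires care is that the definition of $\imath_K(\varphi)$ is phrased through a neighborhood $\vartheta(K)$ while the volume bound lives on $K$; since $\varphi$ is $m$-sh and hence locally bounded away from its $-\infty$ locus, the integrability exponent is a local quantity, so the layer-cake computation carried out over $K$ (after shrinking $\vartheta(K)$ appropriately) indeed determines $\imath_K(\varphi)$. This compatibility between the integration domain in the definition and the domain of the volume hypothesis is the step I would verify most carefully before declaring the proof complete.
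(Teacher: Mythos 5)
Your proof is correct and is essentially the paper's own argument: the inequality $\imath_K(\varphi)\le\gamma$ is obtained by citing Lemma \ref{lemme}, and the reverse inequality by the layer-cake identity split at $s=1$, with the small-$s$ part bounded by $V(K)$ and the tail by $C_\alpha\int_1^{+\infty}s^{c-1-\alpha}\,ds$, convergent exactly for $c<\alpha$ --- the paper writes the same computation in the substituted form $\int_{\mathbb{R}_+}V(A_\varphi(-s^{1/\alpha})\cap K)\,ds$ with auxiliary parameters $\alpha<\alpha_0<\gamma$ in place of your $c<\alpha$. The one caveat you flag, that the definition of $\imath_K(\varphi)$ uses a neighborhood $\vartheta(K)$ while the layer-cake bound yields only $\int_K|\varphi|^c\,dV<+\infty$, is likewise left implicit in the paper, whose proof also passes directly from integrability over $K$ to $\alpha\le\imath_K(\varphi)$, so your attempt matches the published proof on that point as well.
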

A similar result for the complex singularity exponents of plurisubharmonic functions was proved by Kiselman \cite{Ki}.
\begin{proof}
In order to simplify the notations, we set $$\gamma=\sup\left\{\alpha>0; \exists\; C_{\alpha}>0,\ \forall\; t<0\ V(A_{\varphi}(t)\cap K)\leq\frac{C_{\alpha}}{|t|^{\alpha}}\right\}.$$
Thanks to Lemma \ref{lemme}, for $\alpha<\imath_K(\varphi)$ there is $C_{\alpha}>0$ such that for any $t<0$ one has
$V(A_{\varphi}(t)\cap K)\leq\frac{C_{\alpha}}{|t|^{\alpha}}$,
which means that $\gamma\geq \imath_K(\varphi)$.\\
In the other hand, let $0<\alpha_0<\gamma$, then there exists $C_{\alpha_0}>0$ such that for any $t<0$, $$V(A_{\varphi}(t)\cap K)\leq\ds\frac{C_{\alpha_0}}{|t|^{\alpha_0}}.$$
It follows that for any $0<\alpha<\alpha_0$ we have
$$\begin{array}{lcl}
\ds\int_K|\varphi(z)|^{\alpha}dV(z) &=&\ds\int_{\mathbb{R}_+} V(A_{|\varphi|^{\alpha}}(s)\cap K)ds\\
 &=&\ds\int_{\mathbb{R}_+}V(A_{\varphi}(-s^{\frac{1}{\alpha}})\cap K)ds\\
 &\leq&\ds\int_0^1V(A_{\varphi}(-s^{\frac{1}{\alpha}})\cap K)+\ds\int_1^{+\infty}\frac{C_{\alpha_0}}{s^{\frac{\alpha_0}{\alpha}}}ds \\
 &\leq& V(K)+\ds\int_1^{+\infty}\frac{C_{\alpha_0}}{s^{\frac{\alpha_0}{\alpha}}}ds<+\infty.
\end{array}$$
Thus, $\alpha\leq \imath_{K}(\varphi)$ so the result holds.
\end{proof}

The main result of this part is the following theorem:
\begin{theo}
Let $\varphi$ be an $m-$sh function on $\Omega$ and $x\in\Omega$. Then $$\imath_x(\varphi)\geq \frac{n}{n-m}.$$
Moreover, if $\nu_{\varphi}(x)>0$ then $$\imath_x(\varphi)\leq\frac{nm}{n-m}.$$
\end{theo}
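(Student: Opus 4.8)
The plan is to prove the two inequalities separately; the lower bound is the substantial one, while the upper bound is a fairly direct consequence of Theorem \ref{th2}.

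For the upper bound I would show that a strictly positive Lelong number forces $\varphi$ to be so negative on average near $x$ that $|\varphi|^{c}$ cannot be integrable for $c\ge\frac{nm}{n-m}$. After subtracting a constant (which does not change $\imath_x$) I may assume $\varphi<0$ on a ball $\mathbb{B}(x,r_0)$. By Theorem \ref{th2}, $\lambda(\varphi,x,r)=\tfrac12\nu_\varphi(x)\,\phi_m(r)\,(1+o(1))$ as $r\to0^+$; since $\nu_\varphi(x)>0$ and $\phi_m(r)=-\tfrac{1}{\frac nm-1}r^{-2(\frac nm-1)}\to-\infty$, there is $c_0>0$ with $|\lambda(\varphi,x,r)|\ge c_0\,r^{-2(\frac nm-1)}$ for small $r$. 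For $c\ge1$, convexity of $t\mapsto|t|^{c}$ and Jensen's inequality on each sphere give $\int_{\mathbb{S}(x,r)}|\varphi|^{c}\,d\sigma\ge |\mathbb{S}(x,r)|\,|\lambda(\varphi,x,r)|^{c}$, so integrating in $r$,
\[
\int_{\mathbb{B}(x,r_0)}|\varphi|^{c}\,dV\ \gtrsim\ \int_0^{r_0} r^{\,2n-1-2(\frac nm-1)c}\,dr .
\]
This diverges as soon as $2n-1-2(\tfrac nm-1)c\le-1$, i.e. $c\ge \tfrac{n}{\frac nm-1}=\tfrac{nm}{n-m}$, whence $\imath_x(\varphi)\le\tfrac{nm}{n-m}$.

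For the lower bound I would pass through the volume of sublevel sets and the Riesz representation of the subharmonic function $\varphi$. The crucial structural input is that $m$-subharmonicity controls the Riesz mass at the correct scale. Writing $\mu=\Delta\varphi$ (equivalently, up to a dimensional constant, the positive measure $dd^c\varphi\wedge\beta^{n-1}$), the definition of $\nu_\varphi(z,r)$ gives $\mu(\mathbb{B}(z,r))=c_n\,r^{\beta}\nu_\varphi(z,r)$ with $\beta=\frac{2n}{m}(m-1)=2n-\frac{2n}{m}$. By Theorem \ref{th1} the function $r\mapsto\nu_\varphi(z,r)$ is increasing, and since $\mathbb{B}(z,r_0)\subset\mathbb{B}(x,2r_0)$ carries finite mass, one gets a \emph{uniform} growth bound $\mu(\mathbb{B}(z,r))\le C\,r^{\beta}$ for all $z\in\mathbb{B}(x,r_0)$ and $r\le r_0$. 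Using the Riesz decomposition on $\mathbb{B}(x,2r_0)$, namely $-\varphi(z)\le c_N\int|z-w|^{-(2n-2)}\,d\mu(w)+O(1)$, the problem reduces to estimating the superlevel sets of the Newtonian potential $p(z)=\int|z-w|^{-(2n-2)}\,d\mu(w)$.

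Here I would run an Adams-type splitting: for a threshold $\rho$ set $p=p_\rho+p^{\rho}$ according to $\{|z-w|<\rho\}$ and $\{|z-w|\ge\rho\}$. Since $\beta<2n-2$, a layer-cake integration against the growth bound yields the pointwise estimate $p^{\rho}(z)\le C\rho^{-(2n-2-\beta)}$, while Fubini gives $\|p_\rho\|_{L^1(\mathbb{B}(x,r_0/2))}\le C\rho^{2}$. Choosing $\rho\simeq s^{-1/(2n-2-\beta)}$ so that $p^{\rho}\le s/2$, Chebyshev applied to $p_\rho$ produces
\[
V\big(\{p>s\}\cap\mathbb{B}(x,\tfrac{r_0}{2})\big)\ \le\ C\,s^{-\frac{2n-\beta}{2n-2-\beta}}\ =\ C\,s^{-\frac{n}{n-m}},
\]
because $2n-\beta=\frac{2n}{m}$ and $2n-2-\beta=2(\frac nm-1)$. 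Hence $V(\{\varphi\le -s\}\cap\mathbb{B}(x,\tfrac{r_0}{2}))\lesssim s^{-n/(n-m)}$, and a layer-cake integration (or directly the sublevel-volume characterization of $\imath_K$ proved above) shows $|\varphi|^{c}\in L^1$ near $x$ for every $c<\tfrac{n}{n-m}$, i.e. $\imath_x(\varphi)\ge\tfrac{n}{n-m}$.

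The main obstacle is the lower bound, and within it two quantitative steps deserve care. First, the monotonicity of the Lelong function must be upgraded into the uniform mass-growth estimate $\mu(\mathbb{B}(z,r))\le C r^{\beta}$ valid for \emph{all} centres $z$ near $x$, not merely $z=x$, since the potential estimate samples $\mu$ at every scale and location. Second, the Adams-type superlevel estimate is where the exponent $\tfrac{n}{n-m}$ is produced: it appears precisely because $m$-subharmonicity fixes the mass-growth exponent at $\beta=2n-\frac{2n}{m}$ instead of the merely-subharmonic value $\beta=0$. It is exactly this improved mass-growth that upgrades the classical subharmonic exponent $\tfrac{n}{n-1}$ to $\tfrac{n}{n-m}$; the two coincide when $m=1$, which is why the estimate is sharp in that case.
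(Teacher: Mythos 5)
Your proposal is correct; your upper bound is essentially the paper's argument, but your lower bound takes a genuinely different route. For the upper bound both proofs reduce to the asymptotics of means supplied by Theorem \ref{th2}: the paper converts the ball mean into the pointwise bound $\varphi(z)\le 4^{-n}\Lambda(\varphi,0,|z|)$ (negativity plus subharmonicity) and integrates $(-\Lambda)^p$ in polar coordinates, while you stay on spheres and invoke Jensen's inequality for $|t|^c$ with $c\ge 1$ (harmless, since $\frac{nm}{n-m}>1$); the same integral $\int_0^{r_0}r^{2n-1-2(\frac nm-1)c}\,dr$ decides both versions. For the lower bound the paper argues through the fundamental solution of the Hessian operator: it represents $\varphi$ near $x$ as a potential of $dd^c\varphi$ against the kernel $\widetilde{\phi}_m(z-\xi)\,(dd^c\widetilde{\phi}_m(z-\xi))^{m-1}\wedge\beta^{n-m}$ plus a smooth remainder, normalizes to probability measures $\mu_z$, applies Jensen's convexity inequality to $(-\varphi)^s$, dominates the kernel via $(dd^c\widetilde{\phi}_m)^{m-1}\wedge\beta^{n-m}\le|z-\xi|^{-\frac{2n}{m}(m-1)}\beta^{n-1}$, and closes with Fubini, the exponent $\frac{n}{n-m}$ emerging as the convergence threshold of $\int_0^\eta t^{-(\frac{2n}{m}-2)s-\frac{2n}{m}(m-1)+2n-1}\,dt$. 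You instead use only the classical Newtonian Riesz representation of the subharmonic function and feed the $m$-subharmonicity in through the uniform mass growth $\mu(\mathbb{B}(z,r))\le Cr^{\beta}$ with $\beta=\frac{2n}{m}(m-1)$, which is indeed available: monotonicity of $r\mapsto\nu_\varphi(z,r)$ (Theorem \ref{th1} applied to the closed $m$-positive current $dd^c\varphi$) together with $\nu_\varphi(z,r_0)\le r_0^{-\beta}\mu(\mathbb{B}(x,2r_0))$ makes the bound uniform in the centre $z$, which is exactly the point you rightly flag as needing care. Your Adams-type splitting then yields the weak-type bound $V(\{\varphi<-s\}\cap\mathbb{B})\le Cs^{-\frac{n}{n-m}}$ at the critical exponent itself, and the paper's sublevel-volume characterization of $\imath_K$ (or a direct layer-cake integration) converts it into $\imath_x(\varphi)\ge\frac{n}{n-m}$. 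The trade-off: the paper's route stays inside Hessian potential theory and yields strong $L^s$ integrability directly for every $s<\frac{n}{n-m}$, whereas yours is more elementary (no Hessian representation formula, only the Laplacian kernel) and delivers slightly sharper endpoint information, namely the weak-$L^{\frac{n}{n-m}}$ estimate on sublevel sets, from which the sub-endpoint strong bounds follow; in both arguments the decisive datum is the same density exponent $\beta$, encoded in the paper by the kernel comparison inequality and in yours by the mass-growth estimate.
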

The first part of this result was proved by Blocki \cite{Bl} where he has conjectured that for any $m-$sh function, the integrability exponent is greater than or equal to $\frac{nm}{n-m}$. If the conjecture is true then we conclude that we have equality in the second statement.\\
Claim that this result gives that for any $1-$sh function $\varphi$ with non vanishing Lelong number at a point $x$, the integrability exponent of $\varphi$ at $x$ is $\imath_x(\varphi)=\frac{n}{n-1}$.
\begin{proof} Without loss of generality we can assume that $x=0\in\Omega$ and $\varphi \leq0$ in a neighborhood of $0$.\\
\begin{enumerate}
\item Let $T=dd^c\varphi$ and $\chi$ be a cut-off function with support in a small ball $\mathbb{B}(r)$, equal to $1$ on $\mathbb{B}(\frac{r}{2})$. As $(dd^c\widetilde{\phi}_m)^m\wedge\beta^{n-m}=\delta_0$, for $z\in \mathbb{B}(\frac{r}{2})$
$$\begin{array}{lcl}
   \varphi(z)&=& \ds\int_{\mathbb{B}(r)}\chi(\xi)\varphi(\xi)(dd^c\widetilde{\phi}_m(z-\xi))^m\wedge\beta^{n-m}(\xi) \\
        &=&\ds\int_{\mathbb{B}(r)}\chi(\xi)\varphi(\xi)dd^c\widetilde{\phi}_m(z-\xi)\wedge(dd^c\widetilde{\phi}_m(z-\xi))^{m-1}\wedge\beta^{n-m}(\xi)\\
        &=&\ds\int_{\mathbb{B}(r)}dd^c(\chi(\xi)\varphi(\xi))\widetilde{\phi}_m(z-\xi)\wedge(dd^c\widetilde{\phi}_m(z-\xi))^{m-1}\wedge\beta^{n-m}(\xi)\\
        &=&\ds\int_{\mathbb{B}(r)}\chi(\xi)dd^c\varphi(\xi)\widetilde{\phi}_m(z-\xi)\wedge(dd^c\widetilde{\phi}_m(z-\xi))^{m-1}\wedge\beta^{n-m}(\xi)\\
        && +\ds\int_{\mathbb{B}(r)}dd^c\chi(\xi)\varphi(\xi)\widetilde{\phi}_m(z-\xi)\wedge(dd^c\widetilde{\phi}_m(z-\xi))^{m-1}\wedge\beta^{n-m}(\xi)\\
        && +\ds\int_{\mathbb{B}(r)}d\chi(\xi)\wedge d^c\varphi(\xi)\widetilde{\phi}_m(z-\xi)\wedge(dd^c\widetilde{\phi}_m(z-\xi))^{m-1}\wedge\beta^{n-m}(\xi)\\
        &&-\ds\int_{\mathbb{B}(r)}d^c\chi(\xi)\wedge d\varphi(\xi)\widetilde{\phi}_m(z-\xi)\wedge(dd^c\widetilde{\phi}_m(z-\xi))^{m-1}\wedge\beta^{n-m}(\xi)\\
        &=&\ds\int_{\mathbb{B}(r)}\chi(\xi)dd^c\varphi(\xi)\widetilde{\phi}_m(z-\xi)\wedge(dd^c\widetilde{\phi}_m(z-\xi))^{m-1}\wedge\beta^{n-m}(\xi)+ R(z) .
\end{array}$$
Where $R$ is a $\mathcal{C}^{\infty}$ function on $\mathbb{B}(r)$.
Set $$J(z)=\ds\int_{\mathbb{B}(r)}\chi(\xi)T(\xi)\wedge(dd^c\widetilde{\phi}_m(z-\xi))^{m-1}\wedge\beta^{n-m}(\xi).$$
As $$\nu_T(0,r)=\ds\int_{\mathbb{B}(r)}T(\xi)\wedge (dd^c\widetilde{\phi}_m(\xi))^{m-1}\wedge\beta^{n-m}(\xi)$$ then for $0<\delta<1$ and $r$ small enough, one has
$$\nu_T(0,\frac{r}{2})\leq J(0)\leq\nu_T(0,r)\leq \nu_T(0)+\delta.$$
By continuity, there exists $0<\varepsilon<\frac{r}{2}$ such that for every \\$z\in \mathbb{B}(\varepsilon)$ one has
$$(1-\delta)\nu_T(0,\frac{r}{2})< J(z)\leq\nu_T(0)+2\delta. $$
Fix $z\in\mathbb{B}(\varepsilon)$ and let $\mu_z$ be the probability measure defined on $\mathbb{B}(r)$ by
$$d\mu_z(\xi)=J^{-1}(z)\chi(\xi)T(\xi)\wedge(dd^c\widetilde{\phi}_m(z-\xi))^{m-1}\wedge\beta^{n-m}(\xi)$$
It follows that
$$-\varphi(z)=\ds\int_{\mathbb{B}(r)}J(z)(-\widetilde{\phi}_m(z-\xi))d\mu_z(\xi)-R(z)$$
As $R$ is $\mathcal{C}^{\infty}$ on $\mathbb{B}(r)$ then it is bounded on $\mathbb{B}(\varepsilon)$.\\
Hence, for any $p\geq1$ we have
$$\begin{array}{lcl}
  (-\varphi(z))^p&\leq&\left(\ds\int_{\mathbb{B}(r)}J(z)(-\widetilde{\phi}_m(z-\xi))d\mu_z(\xi)+C\right)^p\\
  &\leq&C^p\left(\ds-\frac{1}{C}\int_{\mathbb{B}(r)}J(z)\widetilde{\phi}_m(z-\xi)d\mu_z(\xi)+1\right)^p.
 \end{array}$$
It is easy to show that
\begin{equation}
(h+1)^p\leq h^p+\alpha_1h^{p-1}+...+\alpha_{\lfloor p\rfloor-1}h^{p-\lfloor p\rfloor+1}+\alpha(1+h), \ \ \ \forall h\geq0
\end{equation}
 for some positive constantes $\alpha_j,\alpha$ depending on $p$ where $\lfloor p\rfloor$ is the integer part of $p$.
If we set $$h(z)=\frac{-1}{C}\ds\int_{\mathbb{B}(r)}J(z)\widetilde{\phi}_m(z-\xi)d\mu_z(\xi)$$
then $$(-\varphi(z))^p\leq C^p\left(\ds\sum_{j=0}^{\lfloor p\rfloor-1}\alpha_jh^{p-j}(z)+\alpha(1+h(z))\right).$$
Thus, to prove that $(-\varphi)^p\in L^1(\mathbb{B}(\varepsilon))$, it suffices to prove that $h^s\in L^1(\mathbb{B}(\varepsilon))$ for every $s=p,\dots,p-\lfloor p\rfloor+1$.\\
Now, applying Jensen's convexity inequality with the probability measure $\mu_z$, we obtain
$$\begin{array}{lcl}
(-h(z))^s&=& \ds\frac{1}{C^s}\left(\ds\int_{\mathbb{B}(r)}J(z)(-\widetilde{\phi}_m(z-\xi))d\mu_z(\xi)\right)^s\\
         &\leq& \ds\frac{1}{C^s}\int_{\mathbb{B}(r)} J^s(z)(-\widetilde{\phi}_m(z-\xi))^sd\mu_z(\xi)\\
         &\leq&  a_s  \ds\int_{\mathbb{B}(r)}(-\widetilde{\phi}_m(z-\xi))^s\chi(\xi)T(\xi)\wedge(dd^c\widetilde{\phi}_m(z-\xi))^{m-1}\wedge\beta^{n-m}(\xi)\\
         &\leq& a_s  \ds\int_{\mathbb{B}(r)}(-\widetilde{\phi}_m(z-\xi))^sT(\xi)|z-\xi|^{\frac{-2n}{m}(m-1)}\wedge\beta^{n-1}(\xi)
\end{array}$$
where we use $a_s:=\frac{(\nu_T(0)+2\delta)^{s-1}}{C^s}$ and the fact that
 $$(dd^c\widetilde{\phi}_m(z-\xi))^{m-1}\wedge\beta^{n-m}\leq |z-\xi|^{\frac{-2n}{m}(m-1)}\beta^{n-1}(\xi).$$
 We conclude that for every $z\in\mathbb{B}(\varepsilon)$, we have
 $$\begin{array}{lcl}
(-h(z))^s&\leq& a_s\ds\int_{\mathbb{B}(r)}\ds\frac{1}{|z-\xi|^{(\frac{2n}{m}-2)s+\frac{2n}{m}(m-1)}} T(\xi)\wedge\beta^{n-1}(\xi)\\
       &\leq& a_s\ds\int_{\mathbb{B}(r)}\ds\frac{1}{|z-\xi|^{(\frac{2n}{m}-2)s+\frac{2n}{m}(m-1)}}d\sigma_T(\xi).
\end{array}$$
Hence
$$\ds\int_{\mathbb{B}(\varepsilon)}(-h(z))^pdV(z)\leq a_s\ds\int_{\mathbb{B}(\varepsilon)}\ds\int_{\mathbb{B}(r)}\frac{1}{|z-\xi|^{(\frac{2n}{m}-2)+\frac{2n}{m}(m-1)}}d\sigma_T(\xi)dV(z)$$
The Fubini theorem implies
$$\ds\int_{\mathbb{B}(\varepsilon)}(-h(z))^sdV(z)\leq a_s\ds\int_{\mathbb{B}(r)}\ds\int_{\mathbb{B}(\varepsilon)}\frac{dV(z)}{|z-\xi|^{(\frac{2n}{m}-2)s+\frac{2n}{m}(m-1)}}d\sigma_T(\xi)$$
is finite.
Indeed, for $\xi\in\mathbb{B}(\varepsilon)$ and $\eta<\varepsilon-|\xi|$ one has
$$\begin{array}{l}
\ds\int_{\mathbb{B}(\varepsilon)}\frac{dV(z)}{|z-\xi|^{(\frac{2n}{m}-2)s+\frac{2n}{m}(m-1)}}\\
=\ds\int_{\mathbb{B}(\varepsilon)\cap\mathbb{B}(\xi,\eta)}\frac{dV(z)}{|z-\xi|^{(\frac{2n}{m}-2)s+\frac{2n}{m}(m-1)}} +\ds\int_{\mathbb{B}(\varepsilon)\cap\mathbb{B}^c(\xi,\eta)}\frac{dV(z)}{|z-\xi|^{(\frac{2n}{m}-2)s+\frac{2n}{m}(m-1)}}\\
\leq C\ds\int_0^{\eta}t^{-(\frac{2n}{m}-2)s-\frac{2n}{m}(m-1)+2n-1}dt+a
\end{array}$$
the last  integral is finite  for any $s<\frac{n}{n-m}$.
\item  Assume that  $\nu_{\varphi}(0)>0$.
 Using the subharmonicity of $\varphi$ and the fact that $\varphi$ is negative on $\mathbb{B}(3r)\subset \Omega$, it is not hard to see that for any $z\in\mathbb{B}(r)\smallsetminus\{0\}$
 $$\varphi(z)\leq\Lambda(\varphi,z,2|z|)\leq\frac{1}{4^n}\Lambda(\varphi,0,|z|).$$
 Hence, for every $0<p<\imath_0(\varphi)$ we have
 $$(-\varphi(z))^p\geq\left(-\frac{1}{4^n}\Lambda(\varphi,0,|z|)\right)^p.$$
 Thanks to Theorem \ref{th2}, we have $$\frac{2}{n}(n-\frac{n}{m}+1)\frac{\Lambda(\varphi,0,s)}{\phi_m(s)}$$ decreases to $\nu_{\varphi}(0)$ as $s\searrow0$.\\
 It follows that
 $$\begin{array}{l}
 \ds\int_{\mathbb{B}(r)}(-\varphi(z))^pdV(z)\\
\geq\ds\frac{1}{4^{np}}\int_{\mathbb{B}(r)}(-\Lambda(\varphi,0,|z|))^pdV(z)\\
 \geq\ds\left(\frac{n}{2\times4^n(n+1-\frac nm)}\right)^p\frac{2\pi^n}{(n-1)!}\int_0^r\left(\frac{2(n+1-\frac nm)\Lambda(\varphi,0,t)}{n\phi_m(t)}\right)^pt^{2n-1-2p(\frac{n}{m}-1)}dt\\
 \geq\ds\left(\frac{n\nu_\varphi(0)}{2\times4^n(n+1-\frac nm)}\right)^p\frac{2\pi^n}{(n-1)!}\int_0^rt^{2n-1-2p(\frac{n}{m}-1)}dt
 \end{array}$$
     \end{enumerate}
     As $0<p<\imath_0(\varphi)$, then the first integral is finite; hence the last one is too which gives $p<\frac{nm}{n-m}$.
\end{proof}

\begin{rem}
If  the Lelong number of an $m-$sh function at a point $a\in \Omega$ vanishes  then the integrability exponent of this function at this point can be greater than $\frac{nm}{n-m}$. For example one can consider the function  $$\psi(z)=-\frac{1}{|z'|^{2\left(\frac{n-1}{m}-1\right)}}$$ where $z=(z',z_n)\in\mathbb C^{n-1}\times \mathbb C$ and $2\leq m\leq n-2$. It is simple to see that $\psi$ is an $m-$sh function with $\nu_{\psi}(0)=0$ and the integrability exponent of $\psi$ at $0$ is equal to $\frac{m(n-1)}{n-1-m}$ which is greater than $\frac{nm}{n-m}$.
\end{rem}

\section*{Acknowledgements}
Proposition \ref{p3} and Lemma \ref{l3} were proved when the first named author was visiting l'institut de math\'ematiques de Toulouse. She wishes to thank Professor Ahmed Zeriahi for the hospitality and helpful discussions.

 \end{document}